\def\beq{\begin{equation}}
\def\eeq{\end{equation}}
\def\beqn{\begin{eqnarray}}
\def\eeqn{\end{eqnarray}}
\def\supp{\mathop{\hbox{\rm supp}}}
\def\D{{\cal D}}
\def\N{\Bbb N}
\def\Ext{{\hbox{\rm Ext}}}
\newcommand {\no} {\noindent}
\newcommand{\R}{\mathbb{R}}
\newcommand{\G}{\mathcal{G}}
\newcommand{\La}{\mathcal{L}}
\newcommand{\de}{\mathrm {d}}
\newcommand{\Ncal}{\mathcal{N}^{p'(1-s)}_p}
\newcommand{\Ncaln}{\mathcal{N}^{p'(1-s)}_{p,n}}
\chardef\bslash=`\\ 
\newcommand{\dx}{\:{\rm d}x}
\newcommand{\dy}{\:{\rm d}y}
\newcommand{\dm}{\:{\rm d}\mu}
\newcommand{\estdom}{\mathcal{E}{\rm xt}}
\newtheorem{teo}{Theorem}[section]
\newtheorem{prop}{Proposition}[section]
\newtheorem{definition}{Definition}[section]
\newtheorem{lemma}{Lemma}[section]
\theoremstyle{definition}
\theoremstyle{remark}
\newtheorem{rem}{Remark}[section]
\begin{document}


\pagenumbering{arabic}

\title{Obstacle problems for the fractional $p$-Laplacian on fractal domains: well-posedness and asymptotics}

\author{\textsc{Simone Creo$^{(a)}$\footnote{Corresponding author. ORCID ID: 0000-0002-2083-2344} and Salvatore Fragapane$^{(b)}$}\\
\small{Dipartimento di Scienze di Base e Applicate per l'Ingegneria,}\\
\small{Sapienza  Universit\`{a} di Roma, Via A. Scarpa 16,  00161 Roma, Italy}\\
\small{email: $^{(a)}$ simone.creo@uniroma1.it;}\\
\small{$^{(b)}$ salvatore.fragapane5@scuola.istruzione.it, salvatore.fragapane@uniroma1.it}\\
} 
\date{}

 \maketitle
  \begin{abstract} 
	We study obstacle problems for the regional fractional $p$-Laplacian in a domain $\Omega\subset\R^2$ having as fractal boundary the Koch snowflake. We prove well-posedness results for the solution of the obstacle problem, as well as two equivalent formulations. Moreover, we study corresponding approximating obstacle problems in a sequence of domains $\Omega_n\subset\R^2$ having as boundary the $n$-th pre-fractal approximation of the Koch snowflake, for $n\in\N$. After proving the well-posedness of the approximating obstacle problems, we perform the asymptotic analysis for both $n\to+\infty$ and $p\to+\infty$.
\end{abstract}
\noindent\textbf{Keywords}: Fractals, obstacle problems, fractional $p$-Laplacian, quasi-linear elliptic equations, asymptotic behavior.\\
\textbf{2010 Mathematics Subject Classification}: Primary: 28A80, 35R11. Secondary: 35B40, 35J60.

\section*{Acknowledgments} The authors have been supported by the Gruppo Nazionale per l'Analisi Matematica, la Probabilit\`a e le loro Applicazioni (GNAMPA) of the Istituto Nazionale di Alta Matematica (INdAM). They also thank MUR for the support under the project PRIN 2022 -- 2022XZSAFN: \lq\lq Anomalous Phenomena on Regular and Irregular Domains: Approximating Complexity for the Applied Sciences" -- CUP B53D23009540006 (see the website \href{https://www.sbai.uniroma1.it/~mirko.dovidio/prinSite/index.html}{https://www.sbai.uniroma1.it/~mirko.dovidio/prinSite/index.html}).

\section*{Introduction}\label{Intro}
\setcounter{equation}{0}
In this paper we focus on obstacle problems involving fractional $p$-Laplace type operators in two-dimensional irregular domains having fractal or pre-fractal boundaries of Koch type.

There exists a huge literature voted to the study of problems for nonlocal, possibly nonlinear, fractional operators, and part of these works are done for operators acting in domains with fractal and pre-fractal boundaries. On the one hand, from a strictly mathematical point of view, these problems require attention when studying regularity issues; indeed, the presence of domains having such type of boundaries affects the regularity of the solutions and it requires the use of ad-hoc analytical tools, like suitable trace operators and functional spaces. Moreover, they give rise to so many interesting issues both purely theoretical, like uniqueness and asymptotic behavior, and more applied, like numerical analysis.\\
On the other hand, the consideration of these problems finds motivation in many questions connected to real problems. First of all, since the moment fractals were introduced (see \cite{MAN}) they have represented (and they still do) an innovation and an improvement in representing real objects; indeed, thanks to them, natural objects can be modeled in a more precise and accurate way, with respect to the possibilities provided by classical geometries. Moreover, fractals are a very powerful tool in the modeling of various physical phenomena. Actually, it is sufficient to think of the heat conduction phenomenon and remember how the surface plays a crucial role in it; in particular, as it is known, the larger the surface, the faster the conduction (see, for instance, \cite{CDL2012} and \cite{CreoCSF}). In this framework, pre-fractal approximating domains offer the possibility to increase the surface keeping the volume bounded. Furthermore, $p$-Laplace type operators are involved in the construction of mathematical models devoted to the studies of general physical, biological and engineering problems (see \cite{DIA} and the references quoted there).

Problems involving local or nonlocal $p$-Laplace type operators and/or domains with fractal boundary have been widely studied.\\
Naturally, in the analysis of $p$-Laplacian problems, issues as regularity, uniqueness and asymptotic behavior with respect to $p$ have played a crucial role and have been studied by many authors (see, for instance, \cite{MRT} and \cite{BDM} and the reference quoted therein). Moreover, problems related to fractal sets, such as their Hausdorff measure and their approximation by means of the corresponding pre-fractals, opened new perspectives (see for instance, \cite{HU}, \cite{mosco1} and the references quoted there). 
As far as we know, pioneering works in the study of problems involving both topics at the same time, that is $p$-Laplace type operators on domains with fractal and pre-fractal boundaries, are \cite{CV1} and \cite{MV}; subsequently, the results obtained there were used and further developed (see \cite{CPAA}, \cite{CFV}, \cite{F}, \cite{CF2} and the references quoted there). As to the study of problems involving nonlocal (possibly nonlinear and non-autonomous) operators in irregular domains, the literature is recent and goes back to the early 2020s; among the other, we refer to \cite{JEE}, \cite{JCA}, \cite{CLNODEA} and \cite{CLADE}.

Now, in the framework of problems involving the fractional $p$-Laplacian, different questions have been addressed. The issue of regularity, for instance, is analyzed in \cite{IMS}. Instead, speaking about obstacle problems for the fractional $p$-Laplacian in smooth domains, we refer to \cite{AHW}, where a variational obstacle problem with an exterior placed obstacle is studied and its equivalent formulations are proved. The issue of the asymptotic behavior with respect to $p$ was equally widely studied (see, for instance, \cite{BDD}, \cite{CLM}, \cite{FP} and \cite{dTEL}). Moreover, in \cite{MMV} an obstacle problem involving the infinity fractional Laplacian is considered.\\
It is worth pointing out that there is a deep link between the study of obstacle problems for nonlocal operators of fractional Laplacian type and free boundary problems. Among the others, we refer to \cite{rosoton}, \cite{BFRO} and \cite{CSS} and the references listed in.

In this paper, we consider, to our knowledge for the first time, obstacle problems for the regional fractional $p$-Laplacian in domains with fractal boundary of Koch type, with the aim of studying their asymptotic behavior from different points of view.\\
More precisely, we consider the following problems:
\begin{equation*}
\min_{v\in\mathcal{K}}J_p(v)\qquad\text{and}\qquad\min_{v\in\mathcal{K}^n}J_{p,n}(v),
\end{equation*}
where
\begin{equation*}
J_p(v)=\frac{1}{p}\iint_{\Omega\times\Omega}\frac{|v(x)-v(y)|^p}{|x-y|^{2+sp}}\dx\dy-\int_{\Omega}f(x)v(x)\dx+\frac{1}{p}\int_{\partial\Omega}b(x)|v(x)|^{p}\,\de\mu,
\end{equation*}
\begin{equation*}
\mathcal{K}=\{v\in W^{s,p}(\Omega)\,:\,\varphi_1\leq v\leq\varphi_2 \text{ in }\Omega\},
\end{equation*}
\begin{equation*}
J_{p,n}(v)=\frac{1}{p}\iint_{\Omega_n\times\Omega_n}\frac{|v(x)-v(y)|^p}{|x-y|^{2+sp}}\dx\dy-\int_{\Omega_n}f(x)v(x)\dx+\frac{\delta_n}{p}\int_{\partial\Omega_n}b(x)|v(x)|^{p}\,\de\ell,
\end{equation*}
\begin{equation*}
\mathcal{K}^n=\{v\in W^{s,p}(\Omega_n)\,:\,\varphi_{1,n}\leq v\leq\varphi_{2,n} \text{ in }\Omega_n\},
\end{equation*}
$f$ is a given function in a suitable Lebesgue space, $\varphi_1$, $\varphi_2$, $\varphi_{1,n}$ and $\varphi_{2,n}$ are given obstacles, $\delta_n$ is a fixed positive constant depending on $n$, $\Omega\subset\R^2$ is the bounded domain having as boundary the Koch snowflake and, for every $n\in\N$, $\Omega_n\subset\R^2$ is the bounded domain having as boundary the $n$-th approximation of the Koch snowflake (see Section \ref{geometria}).\\
Under suitable assumptions, our main goal is to study the asymptotic behavior of the solutions of the previous problems with respect to $p$ and $n$. In particular, for $p\to+\infty$, we prove that the solutions of the above problems converge to a solution of the following problems (see Theorems \ref{Ap} and \ref{Apn}):
\begin{equation*}
\max\left\{\int_{\Omega}f(x)v(x)\dx\,:\,v\in\mathcal{K}_{\infty}\right\}\qquad\text{and}\qquad\max\left\{\int_{\Omega_n}f(x)v(x)\dx\,:\,v\in\mathcal{K}^n_{\infty}\right\},
\end{equation*}
where
\begin{equation*}
\mathcal{K}_{\infty}=\{v\in W^{s,\infty}(\Omega)\,:\,\varphi_1\leq v\leq\varphi_2 \text{ in }\Omega \text{ with }\|v\|_{s,\infty}\leq1\}
\end{equation*}
and
\begin{equation*}
\mathcal{K}^n_{\infty}=\{v\in W^{s,\infty}(\Omega_n)\,:\,\varphi_{1,n}\leq v\leq\varphi_{2,n} \text{ in }\Omega_n\text{ with } \|v\|_{s,\infty,n}\leq1\}.
\end{equation*}
Moreover, for $p$ fixed (finite or infinite) and  $n\to+\infty$, we prove that the solution to the problem on $\Omega_n$ converges to a solution of the corresponding problem on $\Omega$ (see Theorems \ref{convergenza n p finito} and \ref{convergenza n p infinito}). Furthermore, we prove two equivalent formulations for the problem in the cases of $p$ fixed  (see Theorem \ref{Equi} and \ref{EquiN}) and discuss about the uniqueness (see Theorems \ref{Uni} and Theorems \ref{UniN}); in particular, we prove that the solutions of the obstacle problems (both in the fractal and pre-fractal cases) solve boundary value problems involving the regional fractional $p$-Laplacian with Robin boundary conditions. We point out that, to our knowledge, these are the first results of their kind in the setting of domains with fractal boundary. Moreover, we stress the fact that most of the mentioned bibliography do not consider Robin boundary conditions.\\
The organization of the paper is the following.\\
In Section \ref{preliminari} definitions and preliminary results and tools are given.\\
In Section \ref{SeP} we set the problem, proving equivalent formulations and uniqueness results.\\
Section \ref{AR} is devoted to the study of the asymptotic behavior both with respect to $p$ and to $n$.

\section{Preliminaries}\label{preliminari}
\setcounter{section}{1} \setcounter{equation}{0}

\subsection{Geometry}\label{geometria}

In this paper we denote points in $\R^2$ by $x=(x_1,x_2)$, the Euclidean distance by $|x-x_0|$ and the (open) Euclidean ball by $B(x_0,r)=\{x\in \R^2: |x-x_0|<r \}$ for $x_0\in \R^2$ and $r>0$. The Koch snowflake $K$ \cite{falconer} is the union of three coplanar Koch curves  $K^1$, $K^2$ and $K^3$, see Figure \ref{fig1}. 

\begin{figure}[H]
\centering
\includegraphics[width=0.2\textwidth]{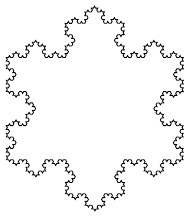}
\caption{The Koch snowflake $K$.}\label{fig1}
\end{figure}

\noindent The Hausdorff dimension of the Koch snowflake is $d_f=\frac{\ln 4}{\ln 3}$.\\ 
The natural finite Borel measure $\mu$ supported on $K$ is defined as
\begin{equation}\label{eq:1}
\mu:=\mu_1+\mu_2+\mu_3,
\end{equation}
where $\mu_i$ denotes the normalized $d_f$-dimensional Hausdorff measure, restricted to $K^i$, $i=1,2,3$.

We denote by
\begin{equation}\label{eq:3bitris}
K_{n+1}=\displaystyle\bigcup_{i=1}^3K^i_{n+1}
\end{equation}
\no the closed polygonal curve approximating $K$ at the $(n+1)$-th step. We denote by $K^i_{n+1}$ the pre-fractal (polygonal) curve approximating $K^i$.

The measure $\mu$ enjoys the following property:
\begin{equation}\label{eq:9bis}
\exists\,c_1,c_2>0\,:\,c_1r^{d_f}\leq \mu(B(P,r)\cap K)\leq c_2r^{d_f}\quad\forall\,P\in K.
\end{equation}
Since $\mu$ is supported on $K$, in \eqref{eq:9bis} we replace $\mu(B(P,r)\cap K)$ with $\mu(B(P,r))$.

Let $\Omega$ denote the two-dimensional open bounded domain with boundary $\partial\Omega=K$ and, for every $n\in\N$, let $\Omega_n$ be the pre-fractal polygonal domains approximating $\Omega$ at the $n$-th step, and let $K_n=\partial \Omega_n$ be the pre-fractal curves.
We note that the sequence $\{\Omega_n\}_{n \in \N}$ is an \emph{invading} sequence of sets exhausting $\Omega$.\\
We denote by $M$ and  by $\stackrel{\circ} M$ any segment of $K_n$ and the related open segment respectively. 
By $\ell$ we denote the natural arc-length coordinate on each segment of the polygonal curve $K_n$ and we introduce the coordinates $x_1=x_1(\ell)$, $x_2=x_2(\ell)$, on every  segment $M^{(j)}_n$ of $K_n$, $j=1,\dots,3\cdot 4^n$. By $\de\ell$ we denote the one-dimensional measure given by the arc-length $\ell$.

Throughout the paper, $C$ will denote possibly different positive constants. The dependence of such constants on some parameters will be given in parentheses or as subscripts.

\subsection{Functional spaces}

Let $\G$ (resp. $\mathcal{S}$) be an open (resp. closed) set of $\R^N$.
By $L^p(\G)$, for $p\geq1$, we denote the Lebesgue space with respect to the Lebesgue measure $\dx$, which will be left to the context whenever that does not create ambiguity. By $L^p(\partial\G)$ we denote the Lebesgue space on $\partial\G$ with respect to a Hausdorff measure $\mu$ supported on $\partial \G$. By $\D(\G)$ we denote the space of infinitely differentiable functions with compact support on $\G$. By $C(\mathcal{S})$ we denote the space of continuous functions on $\mathcal{S}$ and, for $\alpha\in(0,1)$, we denote by $C^{0,\alpha}(\mathcal{G})$ the set of H\"older continuous functions on $\mathcal{G}$ of exponent $\alpha$.\\
By $W^{s,p}(\G)$, for $0<s<1$, we denote the fractional Sobolev space of exponent $s$. Endowed with the following norm
\begin{equation*}
\|u\|^p_{W^{s,p}(\G)}=\|u\|^p_{L^p(\G)}+\iint_{\G\times\G} \frac{|u(x)-u(y)|^p}{|x-y|^{N+sp}}\dx\dy,
\end{equation*}
it becomes a Banach space. Moreover, we denote by $|u|_{W^{s,p}(\G)}$ the seminorm associated to $\|u\|_{W^{s,p}(\G)}$ and we define, for every $u,v\in W^{s,p}(\G)$,
\begin{equation}\notag
(u,v)_{N,s,p}:=\iint_{\G\times\G}|u(x)-u(y)|^{p-2}\frac{(u(x)-u(y))(v(x)-v(y))}{|x-y|^{N+sp}}\dx\dy.
\end{equation}
We recall that, for $p=\infty$, the space $W^{s,\infty}(\G)$ coincides with the space $C^{0,s}(\G)$ (see \cite[Section 8]{hitch}).

In the following we will denote by $|A|$ the Lebesgue measure of a subset $A\subset\R^N$.
For $f\in W^{s,p}(\G)$, we define the trace operator $\gamma_0$ as
\begin{equation}\notag 
\gamma_0f(x):=\lim_{r\to 0}{1\over|B(x,r)\cap\G|}\int_{B(x,r)\cap\G}f(y)\dy
\end{equation}
at every point $x\in \overline{\G}$ where the limit exists. The above limit exists at quasi every $x\in \overline{\G}$ with respect to the $(s,p)$-capacity (see Definition 2.2.4 and Theorem 6.2.1 page 159 in \cite{AdHei}). From now one we denote the trace operator simply by $f|_{\G}$; sometimes we will omit the trace symbol and the interpretation will be left to the context.

We introduce the notion of $d$-set following \cite{JoWa}.
\begin{definition}\label{dset}
A closed nonempty set $\mathcal{S}\subset\R^N$ is a $d$-set (for $0<d\leq N$) if there exist a Borel measure $\mu$ with $\supp\mu=\mathcal{S}$ and two positive constants $c_1$ and $c_2$ such that
\begin{equation}\label{defindset}
c_1r^{d}\leq \mu(B(x,r)\cap\mathcal{S})\leq c_2 r^{d}\quad\forall\,x \in\mathcal{S}.
\end{equation}
The measure $\mu$ is called $d$-measure.
\end{definition}

We recall the definition of Besov space specialized to our case. For generalities on Besov spaces, we refer to \cite{JoWa}.
\begin{definition}
Let $\mathcal{F}$ be a $d$-set with respect to a $d$-measure $\mu$ and $\gamma=s-\frac{N-d}{p}\in(0,1)$. ${B^{p,p}_\gamma(\mathcal{F})}$ is the space of functions for which the following norm is finite:
$$
\|u\|^p_{B^{p,p}_\gamma(\mathcal{F})}=\|u\|^p_{L^p(\mathcal{F})}+\iint_{|x-y|<1}\frac{|u(x)-u(y)|^p}{|x-y|^{d+\gamma p}}\,\de\mu(x)\,\de\mu(y).
$$
\end{definition}

We recall that, for every $n\in\N$, $\Omega$ and $\Omega_n$ are 2-sets, $K_n$ is a 1-set and the Koch snowflake $K$ is a $d_f$-set.

Let $p'$ be the H\"older conjugate exponent of $p$. In the following, we will denote the dual of the Besov space $B^{p,p}_\alpha(K)$ with $(B^{p,p}_\alpha(K))'$; we point out that this space coincides with the space $B^{p',p'}_{-\alpha}(K)$ (see \cite{JoWa2}).

We now state a trace theorem for functions in $W^{s,p}(\G)$, where $\G$ could be either $\Omega$ or $\Omega_n$. For the proof, we refer to \cite[Theorem 1, Chapter VII]{JoWa}.

\begin{prop}\label{teotraccia} Let $\frac{2-d}{p}<s<1$. $B^{p,p}_\gamma(\partial\G)$ is the trace space of $W^{s,p}(\G)$ in the following sense:
\begin{enumerate}
\item[(i)] $\gamma_0$ is a continuous linear operator from $W^{s,p}(\G)$ to $B^{p,p}_\gamma(\partial\G)$;
\item[(ii)] there exists a continuous linear operator $\Ext$ from $B^{p,p}_\gamma(\partial\G)$ to $W^{s,p}(\G)$ such that $\gamma_0\circ \Ext$ is the identity operator in $B^{p,p}_\gamma(\partial\G)$.
\end{enumerate}
\end{prop}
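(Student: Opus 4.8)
The plan is to deduce this statement from the Euclidean Jonsson--Wallin trace theorem. First I would use that the Koch domains $\Omega$ and $\Omega_n$ are $(\var,\delta)$-domains in the sense of Jones, so that $W^{s,p}(\G)$ admits a bounded linear extension operator into $W^{s,p}(\R^2)$; since $W^{s,p}(\R^2)=B^{p,p}_s(\R^2)$ for $0<s<1$, it then suffices to prove (i) and (ii) with $W^{s,p}(\G)$ replaced by $B^{p,p}_s(\R^2)$ and, at the end, to restrict the constructed extension back to $\G$. Throughout, $\mathcal F:=\partial\G$ is a $d$-set with $d$-measure $\mu$ ($d=d_f$ when $\G=\Omega$, $d=1$ when $\G=\Omega_n$), and by hypothesis $\gamma=s-\frac{2-d}{p}\in(0,1)$, equivalently $2-sp<d$ (the upper bound $\gamma<1$ being automatic since $d\leq 2$).

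For part (i) I would first check that the averages defining $\gamma_0u$ converge $\mu$-a.e.\ on $\mathcal F$: the exceptional set has zero $(s,p)$-capacity, and a $d$-set with $d>2-sp$ carries no such set. To bound $\|\gamma_0u\|_{L^p(\mathcal F)}$ I would dominate $\gamma_0u$ by a fractional maximal function of $u$ and integrate against $\mu$ using the density bound $\mu(B(x,r))\leq c_2 r^d$ from \eqref{eq:9bis}. For the Besov seminorm I would fix $x,y\in\mathcal F$ with $|x-y|<1$, pick $j_0$ with $2^{-j_0}\simeq|x-y|$, mollify $u$ at dyadic scales $2^{-j}$, and express $\gamma_0u(x)-\gamma_0u(y)$ as a telescoping sum over $j\geq j_0$ of oscillations of $u$ at those scales together with a comparison term at scale $2^{-j_0}$. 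Raising to the $p$-th power, integrating $\de\mu(x)\,\de\mu(y)$, and using $\mu(B(\cdot,2^{-j}))\simeq 2^{-jd}$ turns the exponent $2+sp$ on $\R^2$ into $d+\gamma p$ on $\mathcal F$; the geometric-type series converges precisely because $0<\gamma<1$. Linearity of $\gamma_0$ is immediate from its definition as an average.

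For part (ii) I would fix a Whitney decomposition $\{Q_i\}$ of the open set $\G$ with $\diam Q_i\simeq\dist(Q_i,\mathcal F)$ and a smooth partition of unity $\{\psi_i\}$ subordinate to a mild dilation of the $Q_i$, with $|\nabla\psi_i|\leq C(\diam Q_i)^{-1}$; for each $i$ I would choose $P_i\in\mathcal F$ realizing (up to a constant) $\dist(Q_i,\mathcal F)$, and define
\begin{equation*}
\Ext g(x):=\sum_i \psi_i(x)\,\frac{1}{\mu(B(P_i,\diam Q_i))}\int_{B(P_i,\diam Q_i)} g\,\dm,\qquad x\in\G,
\end{equation*}
which is linear in $g$. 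To prove $\|\Ext g\|_{W^{s,p}(\G)}\leq C\|g\|_{B^{p,p}_\gamma(\mathcal F)}$ I would estimate the $L^p(\G)$ norm by $\|g\|_{L^p(\mathcal F)}$ via the density of $\mu$, and split the Gagliardo seminorm over $\G\times\G$ according to whether the Whitney cubes containing $x$ and $y$ are neighbours or far apart: in the neighbour case $\Ext g(x)-\Ext g(y)$ is a controlled combination of differences of $\mu$-averages of $g$ over comparable balls near $\mathcal F$, estimated by the Besov seminorm of $g$; in the far case one of the points is at definite distance from $\mathcal F$ and the estimate reduces once more to the density of $\mu$. To verify $\gamma_0\circ\Ext=\mathrm{id}$ I would take a $\mu$-Lebesgue point $P$ of $g$ and note that for $x\in\G$ close to $P$ every $Q_i$ with $\psi_i(x)\neq0$ lies within $C|x-P|$ of $P$, so $\Ext g(x)$ is an average of $g$ over a ball of radius $O(|x-P|)$ centred near $P$; letting $x\to P$ gives $g(P)$.

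The hard part will be the exponent bookkeeping in the seminorm estimates of both (i) and (ii): keeping track of how replacing the ambient dimension $2$ by the measure exponent $d$ of $\mathcal F$ shifts the smoothness index from $s$ to $\gamma=s-\frac{2-d}{p}$, and arranging the dyadic (respectively Whitney) sums so that they converge exactly under the assumption $\frac{2-d}{p}<s<1$. A subsidiary but necessary point is to justify the initial reduction, namely that $\Omega$ and each $\Omega_n$ are $(\var,\delta)$-domains, hence $W^{s,p}$-extension domains, which is classical for Koch-type boundaries.
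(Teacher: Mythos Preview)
The paper does not actually prove this proposition: immediately before the statement it writes ``For the proof, we refer to \cite[Theorem 1, Chapter VII]{JoWa}'', and no argument is given. Your proposal is a correct and fairly detailed outline of precisely that Jonsson--Wallin proof (reduction to $\R^2$ via the $(\varepsilon,\delta)$-extension property of the Koch domains, the maximal-function/telescoping estimate for the trace, and the Whitney-average construction for $\Ext$), so in substance you are following the same route the paper cites; you are simply supplying the mechanics that the paper leaves to the reference. One small point worth making explicit in your write-up: since the paper defines $\gamma_0$ through averages over $B(x,r)\cap\G$ rather than full balls, you should note that the $(\varepsilon,\delta)$ property (or the fact that $\G$ is a $2$-set) guarantees $|B(x,r)\cap\G|\simeq r^2$ for $x\in\partial\G$, so the one-sided averages and the full-space averages of the extended function have the same limit $\mu$-a.e.
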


We point out that, if $\G=\Omega_n$, then its boundary is polygonal. Hence, the trace space of $W^{s,p}(\Omega_n)$ is $B^{p,p}_{s-\frac{1}{p}}(K_n)$, and the latter space coincides with $W^{s-\frac{1}{p},p}(K_n)$.



We recall that $\Omega$ and $\Omega_n$ are extension domains in the sense of the following theorem. For details, we refer to \cite[Theorem 1, page 103 and Theorem 3, page 155]{JoWa}.
\begin{teo}\label{teo estensione} Let $\G$  be either $\Omega$ or $\Omega_n$, for $n\in\N$, $0<s<1$ and $p>1$. There exists a linear extension operator $\mathcal{E}{\rm xt}\colon\,W^{s,p}(\G)\to\,W^{s,p}(\R^N)$ such that
\begin{equation}\label{R-3d}
\|\mathcal{E}{\rm xt}\, w\|^p_{W^{s,p}(\R^N)}\leq {\overline C_{s}} \|w\|^p_{W^{s,p}(\G)},
\end{equation}
where the constant ${\overline C_{s}}>0$ depends on $s$.
\end{teo}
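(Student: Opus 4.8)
\emph{Proof proposal.} The statement is geometric in nature, and the plan is to reduce it to the general extension theory for irregular domains developed by Jonsson and Wallin. The first step is to record the key geometric fact: both $\Omega$ and each $\Omega_n$ are $(\epsilon,\delta)$-domains in the sense of Jones (equivalently, bounded uniform domains), \emph{with constants $\epsilon$ and $\delta$ that can be chosen independently of $n$}. For $\Omega$ this is classical: $\partial\Omega=K$ is a quasicircle, so $\Omega$ is a quasidisk, which is consistent with the already recorded fact that $K$ is a $d_f$-set with $d_f<2$. For $\Omega_n$ one uses that $K_n$ is a polygonal curve whose interior angles take only the two values $\tfrac{\pi}{3}$ and $\tfrac{4\pi}{3}$, so that $\Omega_n$ satisfies a uniform interior cone condition; combined with the self-similar nature of the construction (every refinement step is a union of homotheties of ratio $1/3$), this yields a single pair $(\epsilon_0,\delta_0)$, with $\delta_0$ comparable to $\diam\Omega$, that works for every $n\in\N$ simultaneously.

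The second step is to invoke the abstract extension theorem for such domains, \cite[Theorem~1, p.~103 and Theorem~3, p.~155]{JoWa}: if $\G\subset\R^N$ is an $(\epsilon,\delta)$-domain then there is a bounded linear operator $\estdom\colon W^{s,p}(\G)\to W^{s,p}(\R^N)$ whose norm is controlled by a constant depending only on $N$, $p$, $s$, $\epsilon$ and $\delta$. The mechanism behind it is a Whitney-type reflection: one takes Whitney decompositions of $\G$ and of $\R^N\setminus\overline\G$, pairs each exterior cube with a comparable interior cube (the $(\epsilon,\delta)$ condition ensures the two can be joined by a chain of cubes of uniformly bounded length), defines $\estdom w$ on each exterior cube as a suitable polynomial average of $w$ over the paired interior cube, and patches everything with a smooth partition of unity subordinate to the exterior Whitney cubes; the Gagliardo seminorm of the outcome is then estimated cube by cube by means of the chain condition. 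Applying this with $\G=\Omega$ and with $\G=\Omega_n$, and feeding in the uniform constants from the first step, gives \eqref{R-3d}; the dependence of $\overline C_s$ on $N$ and $p$ is suppressed because these are fixed throughout the paper.

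The only genuine obstacle, in my view, is the uniformity in $n$. Treating $\Omega_n$ naively as a Lipschitz domain would produce an extension constant blowing up as $n\to+\infty$, since $K_n$ has $3\cdot4^n$ edges of length $\sim 3^{-n}$; the $(\epsilon,\delta)$-formulation is exactly what circumvents this, because the Whitney chains stay uniformly short — the Koch prefractal, though increasingly wiggly at small scales, is ``uniform at every scale'' by self-similarity. Making this precise amounts to combining the $(\epsilon_0,\delta_0)$ estimate on a fixed building block with the scale-covariance of the $W^{s,p}$ seminorm under homotheties of ratio $3^{-k}$ (the lower-order $L^p$ term rescaling with a positive power of $3^{-k}$, hence harmlessly), so that the estimate transports from each similar copy to $\Omega_n$ with no loss; assembling the pieces yields \eqref{R-3d} with $\overline C_s$ independent of $n$, which is the form needed for the asymptotic analysis of Section~\ref{AR}.
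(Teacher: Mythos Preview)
Your proposal is essentially aligned with the paper: the paper does not prove this theorem but simply cites \cite[Theorem~1, p.~103 and Theorem~3, p.~155]{JoWa} and records separately (again without argument) that $\overline C_s$ is independent of $n$. Your sketch fills in exactly the geometric verification the paper omits, correctly pinning the uniformity in $n$ on the self-similar structure of the prefractals; one minor remark is that the Whitney--reflection mechanism you describe is Jones's $(\epsilon,\delta)$ construction rather than the Jonsson--Wallin $d$-set extension proper --- both routes apply here (the paper notes just before Proposition~\ref{teotraccia} that $\Omega$ and $\Omega_n$ are $2$-sets, which is the hypothesis Jonsson--Wallin actually use), and both deliver the same bound with the same uniform constant.
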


We stress the fact that, if $\G=\Omega_n$ for $n\in\N$, the constant $\overline C_{s}$ does not depend on $n$.

We recall the following Sobolev embedding result for a class of extension domains. We refer the reader to \cite[Theorem 8.2]{hitch}.
\begin{teo}\label{immsob} Let $s\in(0,1)$ and $p\geq 1$ be such that $sp>N$. Let $\Omega\subseteq\R^N$ be a $W^{s,p}$-extension domain with no external cusps. Then $W^{s,p}(\Omega)$ is continuously embedded in $C^{0,\alpha}(\Omega)$ for $\alpha:=\frac{sp-N}{p}$, i.e. there exists a positive constant $C=C(N,s,p,\Omega)$ such that, for every $u\in W^{s,p}(\Omega)$,
\begin{equation}\label{imm sobolev}
\|u\|_{C^{0,\alpha}(\Omega)}\leq C\|u\|_{W^{s,p}(\Omega)}.
\end{equation}
\end{teo}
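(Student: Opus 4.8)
The plan is to reduce the statement to the case $\Omega=\R^N$ and then establish a fractional Morrey-type inequality by a Campanato-style dyadic iteration.

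\emph{Step 1 (reduction via extension).} Since $\Omega$ is a $W^{s,p}$-extension domain, Theorem \ref{teo estensione} provides $\bar u:=\estdom\, u\in W^{s,p}(\R^N)$ with $\|\bar u\|_{W^{s,p}(\R^N)}\leq \overline C_s\,\|u\|_{W^{s,p}(\Omega)}$. Restricting a function in $C^{0,\alpha}(\R^N)$ to $\Omega$ does not increase its $C^{0,\alpha}$-norm (the Euclidean distance being used throughout), so it suffices to show $\|v\|_{C^{0,\alpha}(\R^N)}\leq C\,\|v\|_{W^{s,p}(\R^N)}$ for every $v\in W^{s,p}(\R^N)$, with $C=C(N,s,p)$.

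\emph{Step 2 (fractional Poincaré inequality on balls).} For a ball $B_\rho=B(x_0,\rho)$, set $(v)_{B_\rho}:=|B_\rho|^{-1}\int_{B_\rho}v$. Writing $v(z)-(v)_{B_\rho}=|B_\rho|^{-1}\int_{B_\rho}(v(z)-v(w))\,dw$, Jensen's inequality together with $|z-w|\leq 2\rho$ on $B_\rho\times B_\rho$ and $|B_\rho|\simeq\rho^N$ gives
\begin{equation*}
\int_{B_\rho}|v(z)-(v)_{B_\rho}|^p\,dz\leq \frac{(2\rho)^{N+sp}}{|B_\rho|}\iint_{B_\rho\times B_\rho}\frac{|v(z)-v(w)|^p}{|z-w|^{N+sp}}\,dz\,dw = C\,\rho^{sp}\,|v|^p_{W^{s,p}(B_\rho)}.
\end{equation*}

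\emph{Step 3 (dyadic iteration and conclusion).} At a Lebesgue point $x$ of $v$, telescoping $v(x)-(v)_{B_r(x)}=\sum_{k\geq 0}\bigl((v)_{B_{r/2^{k+1}}(x)}-(v)_{B_{r/2^{k}}(x)}\bigr)$ and bounding each increment by Hölder's inequality and Step 2 yields $|(v)_{B_{\rho/2}(x)}-(v)_{B_\rho(x)}|\leq C\,\rho^{s-N/p}\,|v|_{W^{s,p}(\R^N)}$; since $\alpha=s-N/p=\tfrac{sp-N}{p}\in(0,1)$ the geometric series converges, so $|v(x)-(v)_{B_r(x)}|\leq C\,r^{\alpha}\,|v|_{W^{s,p}(\R^N)}$. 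Comparing, for $x\neq y$ with $r=|x-y|$, the averages over $B_r(x)$ and $B_r(y)$ through the common ball $B_{2r}(x)\supset B_r(x)\cup B_r(y)$, one obtains $|v(x)-v(y)|\leq C\,|x-y|^{\alpha}\,|v|_{W^{s,p}(\R^N)}$, i.e. the $C^{0,\alpha}$-seminorm bound. The bound $\|v\|_{L^\infty(\R^N)}\leq C\,\|v\|_{W^{s,p}(\R^N)}$ follows from $|v(x)|\leq |v(x)-(v)_{B_1(x)}|+|(v)_{B_1(x)}|$, estimating the first term by the previous inequality and the second by Hölder's inequality in $L^p$. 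Summing the two estimates and invoking Step 1 gives \eqref{imm sobolev}.

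\emph{Expected main obstacle.} The analytic core (Steps 2--3) follows a well-trodden path and is comparatively soft; the delicate point is really Step 1, namely that $\Omega$ (and, uniformly in $n$, each $\Omega_n$) genuinely is a $W^{s,p}$-extension domain with a controlled constant. This is precisely where the geometric hypothesis of \emph{no external cusps} is used, and it explains why the constant in \eqref{imm sobolev} is allowed to depend on $\Omega$.
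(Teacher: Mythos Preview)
The paper does not actually prove this theorem; it merely cites \cite[Theorem~8.2]{hitch}. Your argument is correct and follows the standard route (reduce to $\R^N$ by extension, then run a Campanato--Morrey dyadic iteration), which is essentially the same strategy used in \cite{hitch}.

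Two minor remarks. First, in Step~1 you invoke Theorem~\ref{teo estensione}, but in the paper that result is stated only for the specific Koch-type domains $\Omega$ and $\Omega_n$; for the general statement here you should simply appeal to the \emph{hypothesis} that $\Omega$ is a $W^{s,p}$-extension domain, which by definition furnishes the bounded extension operator you need. Second, your final paragraph slightly misplaces the role of the ``no external cusps'' hypothesis: along the extension route you have chosen, that hypothesis is in fact never used---only the extension property enters. In \cite{hitch} the condition appears because the proof there works directly on $\Omega$, estimating oscillations on $\Omega\cap B_\rho(x)$ and relying on the measure-density bound $|\Omega\cap B_\rho(x)|\geq c\,\rho^N$ that the absence of external cusps guarantees. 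So your approach is a mild streamlining: it shows that the extension-domain hypothesis alone already yields \eqref{imm sobolev}.
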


\section{The obstacle problems}\label{SeP}
\setcounter{section}{2} \setcounter{equation}{0}

\subsection{The regional fractional $p$-Laplacian}

Let $\Omega\subset\R^2$ and $\Omega_n\subset\R^2$ be the domains having the Koch curve and the $n$-th pre-fractal approximating curve as boundary, respectively, given in Section \ref{geometria}.\\
Let $s\in(0,1)$, $p>1$, and $\G$ be either $\Omega$ or $\Omega_n$. We introduce the space
\begin{equation}\notag
\La^{p-1}_s(\G):=\left\{u\colon\G\to\R\,\,\text{measurable}\,\,:\,\int_\G \frac{|u(x)|^{p-1}}{(1+|x|)^{2+sp}}\dx<\infty\right\}.
\end{equation}
We introduce a \lq\lq normalized" regional fractional $p$-Laplacian $(-\Delta_p)^s_\G$, for $x\in\G$: 
\begin{equation}\label{fracreglap}
\begin{split}
(-\Delta_p)^s_\G u(x) &={\rm P.V.}\int_\G |u(x)-u(y)|^{p-2}\frac{u(x)-u(y)}{|x-y|^{2+sp}}\dy\\[4mm]
&=\lim_{\varepsilon\to 0^+}\int_{\{y\in\G\,:\,|x-y|>\varepsilon\}} |u(x)-u(y)|^{p-2}\frac{u(x)-u(y)}{|x-y|^{2+sp}}\dy,
\end{split}
\end{equation}
provided that the limit exists, for every function $u\in \La^{p-1}_s(\G)$.\\
We point out that in the literature there are different versions of fractional $p$-Laplacians. The operator introduced in \eqref{fracreglap} coincides, up to a multiplicative constant, with the regional fractional $p$-Laplace operator introduced in \cite{warmaNODEA}. Here, we consider this \lq\lq normalized" version since it is the most suitable for considering the asymptotics for $p\to+\infty$.

We define the space
\begin{equation*}
V((-\Delta_p)_\Omega^s,\Omega):=\{u\in W^{s,p}(\Omega)\,:\,(-\Delta_p)_\Omega^s u\in L^{p'}(\Omega)\,\,\text{in the sense of distributions}\},
\end{equation*}
which is a Banach space equipped with the norm
\begin{equation}\notag
\|u\|_{V((-\Delta_p)_\Omega^s,\Omega)}:=\|u\|_{W^{s,p}(\Omega)}+\|(-\Delta_p)_\Omega^su\|_{L^{p'}(\Omega)}.
\end{equation}

We recall the following generalized Green formula for the regional fractional $p$-Laplacian on the domain with fractal boundary $\Omega$. For the proof, we refer to Theorem 2.2 in \cite{CLNODEA}.

\begin{teo}[Fractional Green formula]\label{greenf}
There exists a bounded linear operator $\Ncal$ from $V((-\Delta_p)_\Omega^s,\Omega)$ to $(B^{p,p}_{\alpha}(\partial\Omega))'$.\\
The following generalized Green formula holds for every $u\in V((-\Delta_p)_\Omega^s,\Omega)$ and $v\in W^{s,p}(\Omega)$:
\begin{equation}\label{fracgreen}
\begin{split}
&\left\langle \Ncal u,v\right\rangle_{(B^{p,p}_{\gamma}(\partial\Omega))', B^{p,p}_{\gamma}(\partial\Omega)}
=-\int_\Omega (-\Delta_p)_\Omega^s u\,v\,\dx\\[4mm]
&+\iint_{\Omega\times\Omega}|u(x)-u(y)|^{p-2}\frac{(u(x)-u(y))(v(x)-v(y))}{|x-y|^{2+sp}}\,\dx\dy,
\end{split}
\end{equation}
where $\gamma:=s-\frac{2-d_f}{p}>0$.
\end{teo}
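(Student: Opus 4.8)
The plan is to derive the generalized Green formula from a density argument combined with the definition of the distributional regional fractional $p$-Laplacian and the trace theory of Proposition \ref{teotraccia}. First I would observe that for $u\in V((-\Delta_p)_\Omega^s,\Omega)$ and $v\in\D(\Omega)$, the identity
\begin{equation}\notag
-\int_\Omega (-\Delta_p)_\Omega^s u\,v\,\dx + \iint_{\Omega\times\Omega}|u(x)-u(y)|^{p-2}\frac{(u(x)-u(y))(v(x)-v(y))}{|x-y|^{2+sp}}\,\dx\dy = 0
\end{equation}
holds by the very definition of $(-\Delta_p)_\Omega^s u$ as a distribution, after symmetrizing the double integral in $x$ and $y$ (this is the standard bilinear-form computation: the integrand is symmetric, so one may write it as $\tfrac12$ of a sum and recognize it as the pairing against $v$). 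This defines a linear functional $v\mapsto \mathcal{L}_u(v)$ on $\D(\Omega)$ by the right-hand side of \eqref{fracgreen}, and the above shows $\mathcal{L}_u$ vanishes on $\D(\Omega)$.

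Next I would show that $\mathcal{L}_u$ extends to a bounded functional on $W^{s,p}(\Omega)$ whose norm is controlled by $\|u\|_{V((-\Delta_p)_\Omega^s,\Omega)}$. The first term is estimated by H\"older's inequality, $\left|\int_\Omega (-\Delta_p)_\Omega^s u\,v\,\dx\right|\leq \|(-\Delta_p)_\Omega^s u\|_{L^{p'}(\Omega)}\|v\|_{L^p(\Omega)}$; the double-integral term is estimated by H\"older with exponents $p'$ and $p$ applied to the kernel split as $|x-y|^{-(2+sp)/p'}\cdot|x-y|^{-(2+sp)/p}$, giving the bound $|u|_{W^{s,p}(\Omega)}^{p-1}\,|v|_{W^{s,p}(\Omega)}$. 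Hence $|\mathcal{L}_u(v)|\leq C(\|u\|_{W^{s,p}(\Omega)}^{p-1}+\|(-\Delta_p)_\Omega^s u\|_{L^{p'}(\Omega)})\,\|v\|_{W^{s,p}(\Omega)}$, so $\mathcal{L}_u\in (W^{s,p}(\Omega))'$.

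Now the key structural point: since $\mathcal{L}_u$ vanishes on $\D(\Omega)$, and $\D(\Omega)$ is the kernel of the trace operator $\gamma_0\colon W^{s,p}(\Omega)\to B^{p,p}_\gamma(\partial\Omega)$ (more precisely, $\D(\Omega)$ is dense in that kernel, a fact coming from the characterization of $W^{s,p}_0$-type spaces), the functional $\mathcal{L}_u$ factors through $\gamma_0$. Concretely, using the continuous extension operator $\Ext$ from Proposition \ref{teotraccia}(ii), one defines $\langle \Ncal u,\phi\rangle := \mathcal{L}_u(\Ext\,\phi)$ for $\phi\in B^{p,p}_\gamma(\partial\Omega)$; this is well-defined because two extensions of the same $\phi$ differ by an element of the kernel of $\gamma_0$ on which $\mathcal{L}_u$ vanishes, and it is bounded because $\Ext$ is. Finally, for general $v\in W^{s,p}(\Omega)$ one writes $v = (v-\Ext(\gamma_0 v)) + \Ext(\gamma_0 v)$; the first summand has zero trace so $\mathcal{L}_u$ annihilates it, giving $\mathcal{L}_u(v) = \langle\Ncal u,\gamma_0 v\rangle$, which is exactly \eqref{fracgreen}. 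The main obstacle I expect is the rigorous justification that $\mathcal{L}_u$ vanishes on the whole kernel of $\gamma_0$ (not merely on $\D(\Omega)$), i.e. the density of $\D(\Omega)$ in $\{v\in W^{s,p}(\Omega):\gamma_0 v=0\}$ for this irregular domain and the fractional setting; this is where the $d$-set structure of $\partial\Omega$ and the precise value $\gamma = s-\tfrac{2-d_f}{p}>0$ enter, ensuring the trace theory applies and the quotient construction is consistent. The symmetrization step and the H\"older estimates are routine by comparison.
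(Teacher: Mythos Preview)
The paper does not actually contain a proof of this theorem: immediately before the statement it says ``For the proof, we refer to Theorem 2.2 in \cite{CLNODEA}.'' So there is nothing in-paper to compare your argument against line by line.

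That said, your outline is the standard route to such a result and is essentially what one finds in the cited reference (and in the earlier linear version going back to Warma). The three pillars you isolate---(a) the distributional identity forces $\mathcal{L}_u$ to vanish on $\D(\Omega)$, (b) the H\"older bound gives $\mathcal{L}_u\in (W^{s,p}(\Omega))'$ with norm controlled by $\|u\|_{W^{s,p}(\Omega)}^{p-1}+\|(-\Delta_p)^s_\Omega u\|_{L^{p'}(\Omega)}$, and (c) the quotient/factoring through the trace via the lifting $\Ext$ of Proposition~\ref{teotraccia}---are exactly the right ones, and you correctly flag the density of $\D(\Omega)$ in $\ker\gamma_0\subset W^{s,p}(\Omega)$ as the genuine technical point. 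For the snowflake domain this density does hold and is precisely where the $d$-set structure of $\partial\Omega$ and the condition $\gamma=s-\frac{2-d_f}{p}>0$ are used; in \cite{CLNODEA} this is handled by invoking the Jonsson--Wallin trace machinery on $d$-sets together with the $(\varepsilon,\delta)$/extension-domain property of $\Omega$. One small caveat: the phrase ``bounded linear operator'' in the statement refers to $\Ncal u$ as an element of the dual (i.e.\ linear in the test function), not to linearity in $u$; your estimate $\|\Ncal u\|_{(B^{p,p}_\gamma(\partial\Omega))'}\leq C(\|u\|_{W^{s,p}(\Omega)}^{p-1}+\|(-\Delta_p)^s_\Omega u\|_{L^{p'}(\Omega)})$ is the correct boundedness statement, and the map $u\mapsto\Ncal u$ is of course nonlinear for $p\neq2$.
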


Analogously, we can define the fractional normal derivative $\Ncaln$ on $K_n$ (as a linear and continuous operator acting on $W^{s-\frac{1}{p},p}(K_n)$ by means of a \lq\lq pre-fractal" fractional Green formula by proceeding as in \cite{CLNODEA}.

\subsection{Equivalent formulations}

From now on, we take $2<sp<p$. Moreover, we will assume
\begin{equation}\label{GenHyp}
\begin{cases}
f\in L^1(\Omega),\\
b\in C(\overline\Omega)\,:\, b(x)>0\,\,\forall\,x\in\overline\Omega.
\end{cases}
\end{equation}
Let us consider the minimum problems
\begin{equation}\label{Pp}
\min_{v\in\mathcal{K}}J_p(v)
\end{equation}
and 
\begin{equation}\label{Ppn}
\min_{v\in\mathcal{K}^n}J_{p,n}(v),
\end{equation}
where
\begin{equation}\label{Jp}
J_p(v)=\frac{1}{p}\iint_{\Omega\times\Omega}\frac{|v(x)-v(y)|^p}{|x-y|^{2+sp}}\dx\dy-\int_{\Omega}f(x)v(x)\dx+\frac{1}{p}\int_{\partial\Omega}b(x)|v(x)|^{p}\,\de\mu,
\end{equation}
\begin{equation}\label{K}
\mathcal{K}=\{v\in W^{s,p}(\Omega)\,:\,\varphi_1\leq v\leq\varphi_2 \text{ in }\Omega\},
\end{equation}
\begin{equation}\label{Jpn}
J_{p,n}(v)=\frac{1}{p}\iint_{\Omega_n\times\Omega_n}\frac{|v(x)-v(y)|^p}{|x-y|^{2+sp}}\dx\dy-\int_{\Omega_n}f(x)v(x)\dx+\frac{\delta_n}{p}\int_{\partial\Omega_n}b(x)|v(x)|^{p}\,\de\ell,
\end{equation}
where $\delta_n$ is a fixed positive constant depending on $n$,
\begin{equation}\label{Kn}
\mathcal{K}^n=\{v\in W^{s,p}(\Omega_n)\,:\,\varphi_{1,n}\leq v\leq\varphi_{2,n} \text{ in }\Omega_n\},
\end{equation}
and the obstacles $\varphi_1(x),\varphi_2(x),\varphi_{1,n}(x),\varphi_{2,n}(x)$ are given and they satisfy the following assumptions:
\begin{equation}\label{HypOb}
\begin{cases}
\varphi_1,\varphi_2\in W^{s,p}(\Omega),\\
\varphi_1\leq\varphi_2 \text{ in }\Omega,
\end{cases}
\end{equation}
and, for every $n\in\N$,
\begin{equation} \label{HypObn}
\begin{cases}
\varphi_{1,n},\varphi_{2,n}\in W^{s,p}(\Omega),\\
\varphi_{1,n}\leq\varphi_{2,n} \text{ in }\Omega.
\end{cases}
\end{equation}

We recall the following useful results, which follow by direct inspection.

\begin{prop}\label{Prop1}
The functionals $J_p(v)$ and $J_{p,n}(v)$, defined in \eqref{Jp} and \eqref{Jpn} respectively, are convex, weakly lower semi-continuous and coercive and the sets $\mathcal{K}$ and $\mathcal{K}^n$ are closed and convex.
\end{prop}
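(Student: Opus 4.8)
The plan is to verify each of the four claimed properties separately, since they all follow from standard convexity and functional-analytic arguments once the structure of $J_p$ and $J_{p,n}$ is made explicit. I will focus on $J_p$, the argument for $J_{p,n}$ being identical up to replacing $\Omega$ by $\Omega_n$, $\mu$ by $\delta_n\,\de\ell$, and the trace/embedding constants by their pre-fractal analogues (which, as recalled after Theorem \ref{teo estensione}, are uniform in $n$).

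\emph{Convexity.} I would observe that $J_p$ is a sum of three terms. The map $v\mapsto \frac1p\iint_{\Omega\times\Omega}\frac{|v(x)-v(y)|^p}{|x-y|^{2+sp}}\dx\dy$ is convex because $t\mapsto |t|^p$ is convex on $\R$ for $p>1$ and $v\mapsto v(x)-v(y)$ is linear, so the integrand is a convex function of $v$ for each fixed $(x,y)$, and integration preserves convexity. Similarly $v\mapsto \frac1p\int_{\partial\Omega}b(x)|v(x)|^p\,\de\mu$ is convex since $b\geq 0$ on $\overline\Omega$ by \eqref{GenHyp} and $t\mapsto|t|^p$ is convex; here one uses the trace operator $\gamma_0$ of Proposition \ref{teotraccia}, which is linear, so $v|_{\partial\Omega}$ depends linearly on $v$. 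Finally $v\mapsto -\int_\Omega f(x)v(x)\dx$ is linear, hence convex. The sum of convex functionals is convex.

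\emph{Coercivity.} The main point is to control the full $W^{s,p}(\Omega)$-norm of $v$ by $J_p(v)$ plus lower-order terms. The seminorm part of $J_p$ already gives $\frac1p|v|_{W^{s,p}(\Omega)}^p$; I would bound the linear term by $|\int_\Omega f v\,\dx|\le \|f\|_{L^1(\Omega)}\|v\|_{L^\infty(\Omega)}\le C\|f\|_{L^1(\Omega)}\|v\|_{W^{s,p}(\Omega)}$ using the Sobolev embedding $W^{s,p}(\Omega)\hookrightarrow C^{0,\alpha}(\Omega)$ of Theorem \ref{immsob}, valid since $sp>2$; alternatively one combines a fractional Poincaré-type inequality with the boundary term. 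The boundary term is nonnegative and can be dropped, or used together with the seminorm to dominate $\|v\|_{W^{s,p}(\Omega)}^p$ via the continuity of $\gamma_0$ and a Poincaré inequality. Putting this together yields $J_p(v)\ge \frac{1}{Cp}\|v\|_{W^{s,p}(\Omega)}^p - C'\|f\|_{L^1(\Omega)}\|v\|_{W^{s,p}(\Omega)} - C''$, and since $p>1$ the right-hand side tends to $+\infty$ as $\|v\|_{W^{s,p}(\Omega)}\to+\infty$. I expect \emph{this} step to be the main obstacle: one must be slightly careful about which Poincaré/embedding inequality to invoke so that the boundary term (rather than an $L^p$ term that is not directly present in $J_p$) does the job, and to check the constants behave uniformly in $n$ for $J_{p,n}$.

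\emph{Weak lower semicontinuity.} Since $J_p$ is convex (just shown), it suffices to show it is strongly lower semicontinuous on $W^{s,p}(\Omega)$, because a convex functional that is strongly l.s.c. on a Banach space is automatically weakly l.s.c.\ (Mazur's lemma). Strong l.s.c.\ follows from Fatou's lemma applied to the double integral and the boundary integral along a strongly convergent subsequence (passing to a further subsequence converging a.e.\ and, for the boundary term, using continuity of $\gamma_0$ into $L^p(\partial\Omega)$), together with continuity of the linear term. Alternatively, one notes each of the three pieces is separately convex and strongly continuous or strongly l.s.c., hence weakly l.s.c.

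\emph{Closedness and convexity of $\mathcal K$ and $\mathcal K^n$.} The set $\mathcal K=\{v\in W^{s,p}(\Omega): \varphi_1\le v\le \varphi_2 \text{ in }\Omega\}$ is convex because the pointwise inequalities $\varphi_1\le v\le \varphi_2$ are preserved under convex combinations: if $\varphi_1\le v_0,v_1\le\varphi_2$ and $t\in[0,1]$ then $\varphi_1=t\varphi_1+(1-t)\varphi_1\le tv_1+(1-t)v_0\le t\varphi_2+(1-t)\varphi_2=\varphi_2$. For closedness, take $v_k\to v$ in $W^{s,p}(\Omega)$; extract a subsequence converging a.e.\ in $\Omega$, so the inequalities $\varphi_1\le v_k\le\varphi_2$ pass to the limit a.e., giving $v\in\mathcal K$ (one may also use that, by Theorem \ref{immsob}, convergence in $W^{s,p}(\Omega)$ implies uniform convergence, making the limit inequalities immediate everywhere). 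The argument for $\mathcal K^n$ is verbatim the same, using \eqref{HypObn}. Assembling these four items proves the proposition.
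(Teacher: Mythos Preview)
Your proposal is correct. The paper itself does not give a proof of this proposition: it simply states that these facts ``follow by direct inspection'', so your detailed verification is a genuine expansion rather than a different route. One small remark on coercivity: the cleanest way to carry out the step you flag as ``the main obstacle'' is exactly the content of Lemma~\ref{EquiNorm} (stated immediately after the proposition in the paper), which shows that $|v|_{W^{s,p}(\Omega)}^p+\int_{\partial\Omega}b|v|^p\,\de\mu$ is an equivalent norm on $W^{s,p}(\Omega)$; with this in hand, $J_p(v)\ge \tfrac{1}{p}\|v\|_{b,s,p}^p - C\|f\|_{L^1}\|v\|_{W^{s,p}}$ follows at once, and the analogous pre-fractal lemma (with constants uniform in $n$, as you note) handles $J_{p,n}$.
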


\begin{lemma}\label{Dis1}
Let $a,b\in\R^N$ and $p>1$. It holds that
\begin{equation}\label{ausiliare}
\frac{1}{p}(|a|^p-|b|^p)\geq|b|^{p-2}b(a-b).
\end{equation}
\end{lemma}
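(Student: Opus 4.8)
The plan is to prove the elementary convexity inequality \eqref{ausiliare}, namely that for $a,b\in\R^N$ and $p>1$,
\[
\frac{1}{p}\left(|a|^p-|b|^p\right)\geq |b|^{p-2}b\cdot(a-b).
\]
The natural approach is to recognize the right-hand side as the value of the derivative (gradient) of the convex function $\Phi(x):=\frac{1}{p}|x|^p$ at the point $b$, applied to the increment $a-b$. Indeed, $\Phi$ is differentiable on $\R^N$ with $\nabla\Phi(x)=|x|^{p-2}x$ (including at $x=0$ when $p>1$, where the gradient is $0$), and $\Phi$ is convex because $t\mapsto\frac{1}{p}t^p$ is convex and nondecreasing on $[0,\infty)$ while $x\mapsto|x|$ is convex. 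The desired inequality is then precisely the first-order characterization of convexity: for a differentiable convex function, $\Phi(a)\geq\Phi(b)+\nabla\Phi(b)\cdot(a-b)$.

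First I would state that $\Phi(x)=\frac{1}{p}|x|^p$ is convex and $C^1$ on $\R^N$, computing $\nabla\Phi(x)=|x|^{p-2}x$. For the convexity, I would either invoke the composition rule (nondecreasing convex function of a convex function) or, if a self-contained argument is preferred, check that the Hessian is positive semidefinite on $\R^N\setminus\{0\}$ and handle the origin by continuity. Then I would apply the standard supporting-hyperplane inequality for convex functions to conclude. Alternatively, a one-variable reduction works: fix $a,b$ and set $g(t):=\Phi(b+t(a-b))$ for $t\in[0,1]$; then $g$ is convex in $t$, so $g(1)-g(0)\geq g'(0)=\nabla\Phi(b)\cdot(a-b)$, which is exactly \eqref{ausiliare}.

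There is essentially no main obstacle here — this is a routine inequality — but the one point requiring a word of care is the behavior at $b=0$: when $b=0$ the right-hand side is $0$ (since $p>1$ makes $|b|^{p-2}b=0$) and the inequality reduces to $\frac{1}{p}|a|^p\geq 0$, which is trivial; and when $a=b$ both sides vanish. For $b\neq 0$ the gradient $|b|^{p-2}b$ is classical and the convexity argument applies directly. Thus the proof amounts to: (i) identify $\Phi$, (ii) verify its convexity and differentiability, (iii) invoke the first-order convexity inequality, (iv) note the trivial boundary case $b=0$.
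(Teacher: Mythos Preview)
Your proposal is correct. The paper does not supply a proof of this lemma at all: it simply states that the result ``follows by direct inspection.'' Your argument via the convexity of $\Phi(x)=\frac{1}{p}|x|^p$ and the first-order characterization $\Phi(a)\geq\Phi(b)+\nabla\Phi(b)\cdot(a-b)$ is the standard way to justify this inequality, and your handling of the boundary case $b=0$ is appropriate.
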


The following Lemma provides us with an equivalent norm on $W^{s,p}(\Omega)$. The same result can be proved also for $W^{s,p}(\Omega_n)$. 
\begin{lemma}\label{EquiNorm}
Let us consider $2<sp<p$ and let $b$ satisfy \eqref{GenHyp}. Then, for any $u\in W^{s,p}(\Omega)$, the following norm
\begin{equation}\label{bsp}
\|u\|_{b,s,p}:=\left(|u|^p_{W^{s,p}(\Omega)}+\int_{\partial\Omega}b|u|^{p}\dm\right)^{\frac{1}{p}}
\end{equation}
is equivalent to $\|u\|_{W^{s,p}(\Omega)}$.
\end{lemma}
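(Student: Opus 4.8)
The plan is to show that $\|\cdot\|_{b,s,p}$ and $\|\cdot\|_{W^{s,p}(\Omega)}$ are equivalent by establishing two-sided bounds. One direction is essentially immediate: since $b\in C(\overline\Omega)$, it is bounded, and the trace operator $\gamma_0$ from $W^{s,p}(\Omega)$ to $B^{p,p}_\gamma(\partial\Omega)$ is continuous by Proposition \ref{teotraccia} (note that $2<sp$ guarantees $\tfrac{2-d_f}{p}<s$, so $\gamma>0$ and the trace theorem applies); composing with the continuous embedding $L^p(\partial\Omega)\hookrightarrow$ the Besov norm controls $\int_{\partial\Omega} b|u|^p\dm$ by $C\|u\|^p_{W^{s,p}(\Omega)}$, while the seminorm term is trivially dominated by the full norm. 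Hence $\|u\|_{b,s,p}\le C\|u\|_{W^{s,p}(\Omega)}$.

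The substantive direction is $\|u\|_{W^{s,p}(\Omega)}\le C\|u\|_{b,s,p}$, i.e. controlling $\|u\|_{L^p(\Omega)}$ by the right-hand side. I would argue by contradiction via a compactness argument. Suppose no such constant exists; then there is a sequence $u_k\in W^{s,p}(\Omega)$ with $\|u_k\|_{W^{s,p}(\Omega)}=1$ and $\|u_k\|_{b,s,p}\to 0$. In particular $|u_k|_{W^{s,p}(\Omega)}\to 0$ and $\int_{\partial\Omega} b|u_k|^p\dm\to 0$. Since $\Omega$ is a bounded extension domain (Theorem \ref{teo estensione}) and $sp>2=N$, Theorem \ref{immsob} gives a compact embedding of $W^{s,p}(\Omega)$ into $C^{0,\alpha}(\overline\Omega)$ for suitable $\alpha<\tfrac{sp-2}{p}$ (compactness follows from Arzel\`a--Ascoli applied to the uniformly bounded, equi-H\"older-continuous family, using the bound \eqref{imm sobolev}); hence, up to a subsequence, $u_k\to u$ uniformly on $\overline\Omega$. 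The vanishing seminorm forces $|u|_{W^{s,p}(\Omega)}=0$ by lower semicontinuity of the Gagliardo seminorm under uniform (hence $L^p$) convergence, so $u$ is constant on $\Omega$, say $u\equiv c$. The vanishing boundary term $\int_{\partial\Omega} b|u_k|^p\dm\to 0$ together with uniform convergence and $b\ge b_0>0$ on $\overline\Omega$ (a consequence of \eqref{GenHyp} and continuity on the compact set $\overline\Omega$) gives $\int_{\partial\Omega} b|c|^p\dm=0$, whence $c=0$. Thus $u_k\to 0$ uniformly, so $\|u_k\|_{L^p(\Omega)}\to 0$, and combined with $|u_k|_{W^{s,p}(\Omega)}\to 0$ this yields $\|u_k\|_{W^{s,p}(\Omega)}\to 0$, contradicting $\|u_k\|_{W^{s,p}(\Omega)}=1$.

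The main obstacle is justifying the compactness step and the lower semicontinuity of the seminorm cleanly. For the former, I rely on $sp>N=2$ so that Theorem \ref{immsob} places us in the H\"older-continuous regime; the embedding into $C^{0,\alpha}$ with $\alpha$ strictly less than the critical exponent is compact by Arzel\`a--Ascoli, which is exactly what supplies a uniformly convergent subsequence. For the latter, uniform convergence $u_k\to u$ implies pointwise convergence of the integrand $|u_k(x)-u_k(y)|^p/|x-y|^{2+sp}$ a.e., so Fatou's lemma gives $|u|^p_{W^{s,p}(\Omega)}\le\liminf_k |u_k|^p_{W^{s,p}(\Omega)}=0$. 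Finally, once $\|\cdot\|_{b,s,p}$ controls $\|\cdot\|_{L^p(\Omega)}$ and hence the full norm, the reverse bound already noted closes the equivalence; the identical argument works verbatim on $\Omega_n$ (with $\partial\Omega_n=K_n$ a $1$-set and $\de\ell$ in place of $\dm$), since $\Omega_n$ is likewise an extension domain with the relevant constants uniform in $n$.
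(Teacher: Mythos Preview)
Your argument is correct. The paper itself does not supply a self-contained proof of this lemma: it merely indicates that one should adapt \cite[Theorem 2.3]{warmaCPAA} (with further pointers to \cite[Proposition 2.8]{PW} and \cite[Theorem 1.5]{CLNODEA}). The standard route in those references is precisely a compactness/contradiction argument of the type you wrote---a Poincar\'e-type inequality obtained by normalizing, extracting a convergent subsequence via a compact embedding, and using the boundary term (with $b\ge b_0>0$) to kill the constant. Your use of the hypothesis $sp>2$ to land in the H\"older regime (Theorem \ref{immsob}) and then invoke Arzel\`a--Ascoli for compactness into $C(\overline\Omega)$ is clean and makes the trace step transparent, since traces of continuous functions are classical restrictions. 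One cosmetic point: the phrase ``composing with the continuous embedding $L^p(\partial\Omega)\hookrightarrow$ the Besov norm'' is garbled---what you need (and clearly intend) is the trivial embedding $B^{p,p}_\gamma(\partial\Omega)\hookrightarrow L^p(\partial\Omega,\mu)$, immediate from the definition of the Besov norm, which combined with Proposition \ref{teotraccia} bounds $\int_{\partial\Omega} b|u|^p\dm$ by $\|b\|_{C(\overline\Omega)}\,C\|u\|^p_{W^{s,p}(\Omega)}$.
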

\proof
The proof can be obtained by adapting the proof of \cite[Theorem 2.3]{warmaCPAA}, see also \cite[Proposition 2.8]{PW} and \cite[Theorem 1.5]{CLNODEA}.
\endproof

For $u,v\in W^{s,p}(\Omega)$, we introduce
\begin{equation}\label{ap}
a_p(u,v):=\iint_{\Omega\times\Omega}\frac{|u(x)-u(y)|^{p-2}[u(x)-u(y)][v(x)-v(y)]}{|x-y|^{2+sp}}\,\dx\dy+\int_{\partial\Omega}b(x)|u(x)|^{p-2}u(x)v(x)\,\dm,
\end{equation}
and, for every $n\in\N$,
\begin{equation}\label{apn}
a_{p,n}(u,v):=\iint_{\Omega_n\times\Omega_n}\frac{|u(x)-u(y)|^{p-2}[u(x)-u(y)][v(x)-v(y)]}{|x-y|^{2+sp}}\,\dx\,\dy+\delta_n\int_{\partial\Omega_n}b(x)|u(x)|^{p-2}u(x)v(x)\,\de\ell.
\end{equation}

We are ready to prove that the problems just introduced admit equivalent formulations. In particular, the following results hold.
\begin{teo}\label{Equi}
Let us assume $2<sp<p$, $\mathcal{K}\neq\emptyset$ and that assumptions \eqref{GenHyp} and \eqref{HypOb} hold. Then the following problems are equivalent:\\
(i) $u\in\mathcal{K}$ is solution to Problem \eqref{Pp};\\
(ii) $u\in\mathcal{K}$ solves the variational problem:
\begin{equation}\label{DVp}
\text{find } u\in\mathcal{K}\,\,:\,\, a_p(u,v-u)-\int_{\Omega}f(x)(v(x)-u(x))\dx\geq 0\quad\text{ for every $v\in\mathcal{K}$};\\
\end{equation} 
(iii) there exists $u\in\mathcal{K}$ solving 
\begin{equation}\label{StrongP}
\begin{cases}
(-\Delta_p)^s_\Omega u(x)=f(x) &\text{ for a.e. }x\in\Omega^1,\\[2mm]
(-\Delta_p)^s_\Omega u(x)=(-\Delta_p)^s_\Omega \varphi_1(x) &\text{ for a.e. }x\in\Omega^2,\\[2mm]
(-\Delta_p)^s_\Omega u(x)=(-\Delta_p)^s_\Omega \varphi_2(x) &\text{ for a.e. }x\in\Omega^3,\\[2mm]
\displaystyle\left\langle\Ncal u,\psi\right\rangle_{(B^{p,p}_\gamma(\partial\Omega))',B^{p,p}_\gamma(\partial\Omega)}+\int_{\partial\Omega}b|u|^{p-2}u\psi\dm=0\quad &\text{for every } \psi\in\mathcal{K},\\
\varphi_1\leq u\leq\varphi_2 &\text{in }\Omega,
\end{cases}
\end{equation}
with $\gamma:=s-\frac{2-d_f}{p}$ and
\begin{equation}\notag
\begin{split}
&\Omega^1:=\{x\in\Omega\,:\,\varphi_1(x)< u(x)<\varphi_2(x)\},\\
&\Omega^2:=\{x\in\Omega\,:\, u(x)=\varphi_1(x)\},\\
&\Omega^3:=\{x\in\Omega\,:\, u(x)=\varphi_2(x)\},
\end{split}
\end{equation}
hence $\Omega=\Omega^1\cup\Omega^2\cup\Omega^3$.
\end{teo}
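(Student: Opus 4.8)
The plan is to prove the chain of equivalences (i) $\Leftrightarrow$ (ii) $\Leftrightarrow$ (iii) in the standard order used for obstacle problems, exploiting convexity of $J_p$ on the closed convex set $\mathcal{K}$ (Proposition \ref{Prop1}) together with the generalized Green formula (Theorem \ref{greenf}).

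\emph{Step 1: (i) $\Leftrightarrow$ (ii).} Since $\mathcal{K}$ is convex and $J_p$ is convex, Gateaux-differentiable along admissible directions, I would compute, for $u,v\in\mathcal{K}$ and $t\in(0,1)$, the incremental quotient $\frac{1}{t}\big(J_p(u+t(v-u))-J_p(u)\big)$. The quadratic-type term produces, in the limit $t\to 0^+$, the form $a_p(u,v-u)$ — the nonlocal double integral contributes the first integral in \eqref{ap}, and the boundary term $\frac1p\int_{\partial\Omega}b|u|^p\dm$ contributes $\int_{\partial\Omega}b|u|^{p-2}u(v-u)\dm$ after differentiating $\frac1p|z|^p$; the linear term $-\int_\Omega f v$ contributes $-\int_\Omega f(v-u)$. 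If $u$ minimizes over $\mathcal{K}$, then this derivative is $\geq 0$, giving \eqref{DVp}. Conversely, convexity of $J_p$ gives $J_p(v)-J_p(u)\geq \langle J_p'(u),v-u\rangle = a_p(u,v-u)-\int_\Omega f(v-u)\geq 0$, so $u$ is a minimizer. (Here I would use Lemma \ref{Dis1} to make the convexity inequality fully rigorous without differentiability, applied pointwise to $a=u(x)-u(y)$, $b=v(x)-v(y)$ inside the integral, and to the boundary term; one should check integrability of each piece using $u,v\in W^{s,p}(\Omega)$, the trace Proposition \ref{teotraccia}, and $f\in L^1$ together with the Sobolev embedding Theorem \ref{immsob}, valid since $sp>2$.)

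\emph{Step 2: (ii) $\Rightarrow$ (iii).} Starting from \eqref{DVp}, I would first take test functions of the form $v=u\pm\phi$ with $\phi\in\mathcal{D}(\Omega^1)$ supported where $\varphi_1<u<\varphi_2$ strictly, so that $v\in\mathcal{K}$ for $\phi$ small; since both signs are admissible this yields $a_p(u,\phi)=\int_\Omega f\phi$ restricted to $\Omega^1$-test functions. Then, using the Green formula \eqref{fracgreen} with $v=\phi\in\mathcal{D}(\Omega)$, the boundary pairing vanishes and one reads off $(-\Delta_p)^s_\Omega u = f$ a.e.\ in $\Omega^1$ in the distributional sense, and in particular $(-\Delta_p)^s_\Omega u\in L^{p'}_{\mathrm{loc}}$ there. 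On $\Omega^2=\{u=\varphi_1\}$ one can only push $u$ upward, i.e.\ $v=u+\phi$ with $\phi\geq 0$; on $\Omega^3=\{u=\varphi_2\}$ only downward. The standard argument — using that on $\{u=\varphi_1\}$ the coincidence forces the relevant differential identity with the obstacle — gives $(-\Delta_p)^s_\Omega u=(-\Delta_p)^s_\Omega\varphi_1$ on $\Omega^2$ and similarly on $\Omega^3$ (this is where one invokes that $u$ and $\varphi_1$ agree on $\Omega^2$ so their pointwise principal-value expressions coincide a.e.\ on that set, together with the obstacle regularity \eqref{HypOb}; the contact set analysis is the delicate point). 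Having identified $(-\Delta_p)^s_\Omega u$ as an $L^{p'}(\Omega)$ function piecewise on $\Omega=\Omega^1\cup\Omega^2\cup\Omega^3$, we conclude $u\in V((-\Delta_p)^s_\Omega,\Omega)$, so $\Ncal u$ is well defined. Finally, plugging a general $v\in\mathcal{K}$ into \eqref{DVp} and using \eqref{fracgreen} to rewrite $a_p(u,v-u)-\int_\Omega f(v-u)$ as $\langle\Ncal u,v-u\rangle + \int_{\partial\Omega}b|u|^{p-2}u(v-u)\dm + \int_\Omega\big((-\Delta_p)^s_\Omega u - f\big)(v-u)\dx$, and observing that the last bulk term is $\geq 0$ by the pointwise equations on each piece (choosing $v$ appropriately, or noting it vanishes on $\Omega^1$ and has a sign on $\Omega^2,\Omega^3$), I would deduce the boundary pairing $\langle\Ncal u,\psi\rangle+\int_{\partial\Omega}b|u|^{p-2}u\psi\dm=0$ by testing with $\psi$ ranging over a suitable dense set of directions in the trace space; taking $v$ with $\gamma_0 v$ arbitrary and $v=u$ in $\Omega$ off a neighborhood of $\partial\Omega$ isolates the boundary condition.

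\emph{Step 3: (iii) $\Rightarrow$ (ii).} This is the easy direction: given $u$ solving \eqref{StrongP}, for any $v\in\mathcal{K}$ apply \eqref{fracgreen} to get $a_p(u,v-u) = \langle\Ncal u,v-u\rangle + \int_{\partial\Omega}b|u|^{p-2}u(v-u)\dm + \int_\Omega(-\Delta_p)^s_\Omega u\,(v-u)\dx$; the first two terms cancel by the Robin condition (valid with $\psi=v-u$, since $v-u$ has admissible trace), and the bulk term equals $\int_{\Omega^1}f(v-u) + \int_{\Omega^2}(-\Delta_p)^s_\Omega\varphi_1(v-\varphi_1) + \int_{\Omega^3}(-\Delta_p)^s_\Omega\varphi_2(v-\varphi_2)$; since $v\geq\varphi_1$ and $v\leq\varphi_2$ in $\Omega$ while the sign of $(-\Delta_p)^s_\Omega\varphi_i$ against the contact constraint works out (one uses $(-\Delta_p)^s_\Omega u=(-\Delta_p)^s_\Omega\varphi_i$ there and the variational inequality structure), one gets $a_p(u,v-u)-\int_\Omega f(v-u)\geq 0$, which is \eqref{DVp}.

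\emph{Main obstacle.} The technical heart is Step 2 — the analysis on the contact sets $\Omega^2,\Omega^3$ and the correct extraction of the Robin boundary condition from the variational inequality. Two subtleties deserve care: (a) justifying that $(-\Delta_p)^s_\Omega u$ belongs to $L^{p'}(\Omega)$ globally (not just locally), which is needed to even state $u\in V((-\Delta_p)^s_\Omega,\Omega)$ and apply Theorem \ref{greenf}; this typically requires the obstacle assumption \eqref{HypOb} to be read as giving $(-\Delta_p)^s_\Omega\varphi_i\in L^{p'}(\Omega)$, and one should verify this is implicitly assumed or add it; and (b) the nonlocality of $(-\Delta_p)^s_\Omega$ means "restricting a test function to $\Omega^1$" does not localize the operator, so the passage from the variational inequality to the pointwise PDE on $\Omega^1$ must be done through the distributional identity and density, not by naive localization — this is precisely why the Green formula of Theorem \ref{greenf} (which handles the fractal boundary) is indispensable rather than a convenience.
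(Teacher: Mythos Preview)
Your treatment of (i)$\Leftrightarrow$(ii) matches the paper's: both use convexity, the Gateaux-quotient computation, and Lemma~\ref{Dis1}. The real divergence is in Step~2, on the contact sets $\Omega^2$ and $\Omega^3$. You write that ``$u$ and $\varphi_1$ agree on $\Omega^2$ so their pointwise principal-value expressions coincide a.e.\ on that set.'' This is false for a nonlocal operator: the value $(-\Delta_p)^s_\Omega u(x)$ at a point $x\in\Omega^2$ is an integral over all of $\Omega$, and on $\Omega^1\cup\Omega^3$ one has $u\neq\varphi_1$ in general, so there is no reason for $(-\Delta_p)^s_\Omega u(x)=(-\Delta_p)^s_\Omega\varphi_1(x)$ to follow from $u(x)=\varphi_1(x)$. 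You correctly flag nonlocality as the obstacle in your final paragraph, but only in connection with localising test functions to $\Omega^1$; you then invoke exactly the local-coincidence reasoning that nonlocality destroys on $\Omega^2,\Omega^3$. The paper does not argue this way at all: it appeals to the abstract Lewy--Stampacchia inequality
\[
(-\Delta_p)^s_\Omega\varphi_2\vee f\;\leq\;(-\Delta_p)^s_\Omega u\;\leq\;(-\Delta_p)^s_\Omega\varphi_1\wedge f,
\]
citing \cite{giglimosconi} and \cite{IMS}, and deduces the equations on $\Omega^2,\Omega^3$ from that. This is a genuinely different and necessary ingredient that your outline is missing; without it (or some substitute valid in the nonlocal setting) Step~2 does not go through.

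A smaller point: in your Step~3 you split $\int_\Omega(-\Delta_p)^s_\Omega u\,(v-u)$ over $\Omega^1,\Omega^2,\Omega^3$ and then assert that ``the sign \dots\ works out,'' but you have not said why $(-\Delta_p)^s_\Omega\varphi_1(v-\varphi_1)\geq f(v-\varphi_1)$ on $\Omega^2$ (and the analogue on $\Omega^3$); this is again a Lewy--Stampacchia-type statement, not an automatic sign. The paper's (iii)$\Rightarrow$(ii) instead collapses the whole bulk term to $\int_\Omega f\,(v-u)$ in one line via Green's formula and the boundary condition, obtaining equality in \eqref{DVp} rather than an inequality.
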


\proof In order to prove our statement, let us start by proving that $(i)\Longleftrightarrow (ii)$.\\
$(ii)\Longrightarrow (i)$: if $u\in\mathcal{K}$ is solution to Problem \eqref{DVp}, then $u\in\mathcal{K}$ is solution to Problem \eqref{Pp}.\\
Let $u\in\mathcal{K}$ be a solution to Problem \eqref{DVp} and, for every $v\in\mathcal{K}$, let $w(x):=v(x)-u(x)$. We have:
\begin{equation}\notag
\begin{split}
0&\leq \iint_{\Omega\times\Omega}\frac{|u(x)-u(y)|^{p-2}[u(x)-u(y)][w(x)-w(y)]}{|x-y|^{2+sp}}\dx\dy-\int_{\Omega}f(x)w(x)\dx\\
&+\int_{\partial\Omega}b(x)|u(x)|^{p-2}u(x)w(x)\dx=-\int_{\Omega}f(x)(v(x)-u(x))\dx+\int_{\partial\Omega}b(x)|u(x)|^{p-2}u(x)(v(x)-u(x))\dm\\
&+\iint_{\Omega\times\Omega}\frac{|u(x)-u(y)|^{p-2}[u(x)-u(y)][(v(x)-v(y))-(u(x)-u(y))]}{|x-y|^{2+sp}}\dx\dy.
\end{split}
\end{equation}
Applying Lemma \ref{Dis1} to the last two terms in the right-hand side of the above inequality, we get
$$0\leq-\int_{\Omega}f(x)v(x)\dx+\int_{\Omega}f(x)u(x)\dx+\frac{1}{p}\int_{\partial\Omega}b(x)|v(x)|^p\dm-\frac{1}{p}\int_{\partial\Omega}b(x)|u(x)|^p\dm$$
$$-\frac{1}{p}\iint_{\Omega\times\Omega}\frac{|u(x)-u(y)|^p}{|x-y|^{2+sp}}\dx\dy+\frac{1}{p}\iint_{\Omega\times\Omega}\frac{|v(x)-v(y)|^p}{|x-y|^{2+sp}}\dx\dy\quad \forall\,v\in\mathcal{K}.$$
The above inequality implies that $J_p(u)\leq J_p(v)$ for every $u\in\mathcal{K}$, that is, $u$ is solution to Problem \eqref{Pp}.

$(i)\Longrightarrow (ii)$: if $u\in\mathcal{K}$ is solution to Problem \eqref{Pp}, then $u\in\mathcal{K}$ is solution to Problem \eqref{DVp}.\\
Let $u\in\mathcal{K}$ be a solution to Problem \eqref{Pp}, that is $J_p(u)\leq J_p(v)$ for every $v\in\mathcal{K}$. This is equivalent to say that (see \cite{T})
$$\frac{1}{\lambda}[J_p(u+\lambda(v-u))-J_p(u)]\geq 0\quad\forall\, v\in\mathcal{K},\,\forall\,\lambda\in(0,1].$$
This means that
\begin{equation}\notag
\begin{split}
0&\leq\frac{1}{p\lambda}\iint_{\Omega\times\Omega}\frac{|(u+\lambda(v-u))(x)-(u+\lambda(v-u))(y)|^p}{|x-y|^{2+sp}}\dx\,\dy-\frac{1}{\lambda}\int_{\Omega}f(x)[u(x)+\lambda(v(x)-u(x))]\dx\\[2mm]
&+\frac{1}{p\lambda}\int_{\partial\Omega}b(x)|u(x)+\lambda(v(x)-u(x))|^{p}\dm-\frac{1}{p\lambda}\iint_{\Omega\times\Omega}\frac{|u(x)-u(y)|^p}{|x-y|^{2+sp}}\dx\dy+\frac{1}{\lambda}\int_{\Omega}f(x)u(x)\dx\\[2mm]
&-\frac{1}{p\lambda}\int_{\partial\Omega}b(x)|u(x)|^{p}\dm=\frac{1}{p\lambda}\iint_{\Omega\times\Omega}\frac{|(u+\lambda(v-u))(x)-(u+\lambda(v-u))(y)|^p-|u(x)-u(y)|^p}{|x-y|^{2+sp}}\dx\dy\\[2mm]
&-\int_{\Omega}f(x)[v(x)-u(x)]\dx+\frac{1}{p\lambda}\int_{\partial\Omega}b(x)\{|u(x)+\lambda(v(x)-u(x))|^p-|u(x)|^p\}\dx\quad \forall\,v\in\mathcal{K},\,\forall\,\lambda\in(0,1].
\end{split}
\end{equation}
Passing to the limit as $\lambda\to 0^+$ in the above, we get:
$$0\leq\iint_{\Omega\times\Omega}\frac{|u(x)-u(y)|^{p-2}[u(x)-u(y)][(v(x)-v(y))-(u(x)-u(y))]}{|x-y|^{2+sp}}\dx\dy$$
$$-\int_{\Omega}f(x)[v(x)-u(x)]\dx+\int_{\partial\Omega}b(x)|u(x)|^{p-2}u(x)[v(x)-u(x)]\dx\quad\forall\,v\in\mathcal{K},$$
i.e., $u$ is a solution of \eqref{DVp}. Hence, the first equivalence is proved.

Now, to complete the proof, let us prove that $(iii)\Longleftrightarrow (ii)$.\\
$(iii)\Longrightarrow (ii)$: if $u\in\mathcal{K}$ is solution of Problem \eqref{StrongP}, then it is also solution to Problem \eqref{DVp}.\\
Let $u\in\mathcal{K}$ be solution of Problem \eqref{StrongP}, and let $v\in\mathcal{K}$ be arbitrary. Hence, setting $w:=v-u$, by using the integration by parts formula \eqref{fracgreen}, we get
\begin{equation}\notag
\begin{split}
a_p(u,w)&=\iint_{\Omega\times\Omega}\frac{|u(x)-u(y)|^{p-2}[u(x)-u(y)][w(x)-w(y)]}{|x-y|^{2+sp}}\dx\dy+\int_{\partial\Omega}b(x)|u(x)|^{p-2}u(x)w(x)\dm\\
&=\int_\Omega (-\Delta_p)^s_\Omega u\,w\dx+\left\langle\Ncal u,w\right\rangle_{(B^{p,p}_\gamma(\partial\Omega))',B^{p,p}_\gamma(\partial\Omega)}+\int_{\partial\Omega}b(x)|u(x)|^{p-2}u(x)w(x)\dm\\
&=\int_\Omega f(x)\,w(x)\dx,
\end{split}
\end{equation}
i.e., $u$ solves \eqref{DVp}.

$(ii)\Longrightarrow (iii)$: if $u\in\mathcal{K}$ is solution to Problem \eqref{DVp}, then it solves also Problem \eqref{StrongP}.\\
Let $u\in\mathcal{K}$ be a solution of the variational inequality \eqref{DVp}. This means that
\begin{equation}\label{sopra}
\begin{aligned}
&\iint_{\Omega\times\Omega}\frac{|u(x)-u(y)|^{p-2}[u(x)-u(y)][w(x)-w(y)]}{|x-y|^{2+sp}}\dx\dy-\int_{\Omega}f(x)w(x)\dx\\
&+\int_{\partial\Omega}b(x)|u(x)|^{p-2}u(x)w(x)\dm\geq0 \text{ for every $v\in\mathcal{K}$, with } w(x)=v(x)-u(x).
\end{aligned}
\end{equation}
We adapt to our setting the techniques in the proof of \cite[Proposition 3.4]{AHW}. Let $$\Omega^1:=\left\{x\in\Omega\,:\,\varphi_1(x)<u(x)<\varphi_2(x)\right\}.$$ We take $\zeta\in\D(\Omega^1)$; then, by construction, it holds for a sufficiently small $\epsilon>0$ that $v:=u+\epsilon\zeta\in\mathcal{K}$.\\
Hence, by taking $v=u+\epsilon\zeta$ in \eqref{sopra} and integrating by parts using \eqref{fracgreen}, we obtain
\begin{equation}\notag
\epsilon\int_{\Omega}(-\Delta_p)^s_\Omega u(x)\zeta(x)\dx-\epsilon\int_{\Omega}f(x)\zeta(x)\dx\geq0\quad\forall\,\zeta\in\D(\Omega^1).
\end{equation}
If we take $v=u-\epsilon\zeta$ we obtain the opposite inequality. Hence, we get that
\begin{equation}\label{equazione a.e.}
(-\Delta_p)^s_\Omega u = f\quad\text{in }\D'(\Omega^1),
\end{equation}
and, by density, in particular in $L^{2}(\Omega^1)$, so it holds almost everywhere in $\Omega^1$.\\
For the cases $x\in\Omega^2$ and $x\in\Omega^3$, the thesis follows from the following general Lewy-Stampacchia inequality: if $u$ solves the variational inequality \eqref{DVp}, then
\begin{equation}\label{LS}
(-\Delta_p)^s_\Omega\varphi_2\vee f\leq (-\Delta_p)^s_\Omega u\leq (-\Delta_p)^s_\Omega\varphi_1\wedge f,
\end{equation}
where $u\wedge v=\inf\{u,v\}$ and $u\vee v=\sup\{u,v\}$. We refer to \cite[Theorem 2.4 and Remarks 3.3 and 3.4]{giglimosconi}, for more details, see also \cite{IMS}.

Let now $\psi\in\mathcal{K}$. Then, for a sufficiently small $\epsilon>0$, in $\Omega^1$ it holds that $v:=u+\epsilon\psi\in\mathcal{K}$. Hence, the variational inequality \eqref{DVp}, together with the fractional Green formula \eqref{fracgreen} and \eqref{equazione a.e.}, yield
\begin{equation}\notag
\epsilon\int_{\partial\Omega} b(x)|u(x)|^{p-2}u(x)\psi(x)\dm+\left\langle\Ncal u,\epsilon\psi\right\rangle_{(B^{p,p}_\gamma(\partial\Omega))',B^{p,p}_\gamma(\partial\Omega)}\geq 0.
\end{equation}
Since, if we take $v=u-\epsilon\psi$, we obtain the opposite inequality, it holds that
\begin{equation}\notag
\int_{\partial\Omega} b(x)|u(x)|^{p-2}u(x)\psi(x)\dm+\left\langle\Ncal u,\psi\right\rangle_{(B^{p,p}_\gamma(\partial\Omega))',B^{p,p}_\gamma(\partial\Omega)}=0\quad\text{for every }\psi\in\mathcal{K}.
\end{equation}

\endproof

\begin{teo}\label{EquiN}
Let us assume $2<sp<p$, $\mathcal{K}^n\neq\emptyset$ for every $n\in\N$, $\delta_n>0$ and that assumptions \eqref{GenHyp} and \eqref{HypObn} hold. Then, for every $n\in\N$, the following problems are equivalent:\\
(i) $u_n\in\mathcal{K}^n$ is solution to Problem \eqref{Ppn};\\
(ii) $u_n\in\mathcal{K}^n$ solves the variational problem:
\begin{equation}\label{DVpn}
\text{find }  u_n\in\mathcal{K}^n\,\,:\,\,a_{p,n}(u_n,v_n-u_n)-\int_{\Omega_n}f(x)(v_n(x)-u_n(x))\dx\geq 0\quad \text{ for every  $v_n\in\mathcal{K}^n$};
\end{equation}
(iii) there exists $u_n\in \mathcal{K}^n$ solving 
\begin{equation}\label{StrongPn}
\begin{cases}
(-\Delta_p)^s_{\Omega_n} u_n(x)=f(x) &\text{ for a.e. }x\in\Omega^1_n,\\[2mm]
(-\Delta_p)^s_{\Omega_n} u_n(x)=(-\Delta_p)^s_\Omega \varphi_{1,n}(x) &\text{ for a.e. }x\in\Omega^2_n,\\[2mm]
(-\Delta_p)^s_{\Omega_n} u_n(x)=(-\Delta_p)^s_\Omega \varphi_{2,n}(x) &\text{ for a.e. }x\in\Omega^3_n,\\[2mm]
\displaystyle\left\langle\Ncaln u_n,\psi\right\rangle_{(W^{s-\frac{1}{p},p}(\partial\Omega_n))',W^{s-\frac{1}{p},p}(\partial\Omega_n)}+\delta_n\int_{\partial\Omega_n}b|u_n|^{p-2}u_n\psi\,\de\ell=0\quad &\text{for every }\psi\in\mathcal{K}^n,\\[2mm]
\varphi_{1,n}\leq u_n\leq\varphi_{2,n} &\text{in }\Omega_n,
\end{cases}
\end{equation}
with
\begin{equation}\notag
\begin{split}
&\Omega^1_n:=\{x\in\Omega_n\,:\,\varphi_{1,n}(x)< u_n(x)<\varphi_{2,n}(x)\},\\
&\Omega^2_n:=\{x\in\Omega_n\,:\, u_n(x)=\varphi_{1,n}(x)\},\\
&\Omega^3_n:=\{x\in\Omega_n\,:\, u_n(x)=\varphi_{2,n}(x)\},
\end{split}
\end{equation}
hence $\Omega_n=\Omega^1_n\cup\Omega^2_n\cup\Omega^3_n$.
\end{teo}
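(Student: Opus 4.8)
The plan is to repeat, essentially line by line, the proof of Theorem~\ref{Equi}, replacing the fractal data $(\Omega,\partial\Omega=K,\mu,a_p,J_p,\Ncal)$ everywhere by the pre-fractal data $(\Omega_n,K_n,\delta_n\,\de\ell,a_{p,n},J_{p,n},\Ncaln)$. The only genuinely new ingredient is the pre-fractal fractional Green formula: as recalled right after Theorem~\ref{greenf}, one defines $\Ncaln$ as a bounded linear functional on $W^{s-\frac1p,p}(K_n)$ (the trace space of $W^{s,p}(\Omega_n)$, since $K_n$ is polygonal) together with an identity analogous to \eqref{fracgreen}, by reproducing the construction of \cite{CLNODEA} segment by segment on $K_n$. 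Since here $n$ is fixed, none of the constants involved need to be uniform in $n$, so the pre-fractal case is technically no harder than the fractal one.

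For $(i)\Longleftrightarrow(ii)$: given a minimizer $u_n$ of \eqref{Ppn}, one uses the convexity of $\mathcal{K}^n$ and of $J_{p,n}$ (Proposition~\ref{Prop1}) to write $\frac1\lambda\big(J_{p,n}(u_n+\lambda(v_n-u_n))-J_{p,n}(u_n)\big)\ge 0$ for $\lambda\in(0,1]$ and let $\lambda\to 0^+$; the one-sided directional derivative of the fractional $p$-energy and of the boundary term $\frac{\delta_n}{p}\int_{\partial\Omega_n}b|v|^p\,\de\ell$ (well defined because $\delta_n>0$ and $b\in C(\overline\Omega)$ with $b>0$, cf.\ the $\Omega_n$-version of Lemma~\ref{EquiNorm}) produces \eqref{DVpn} with $a_{p,n}$ as in \eqref{apn}. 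Conversely, starting from \eqref{DVpn}, set $w=v_n-u_n$ and apply Lemma~\ref{Dis1} to the two quantities $|u_n(x)-u_n(y)|^{p-2}(u_n(x)-u_n(y))\big((v_n(x)-v_n(y))-(u_n(x)-u_n(y))\big)$ and $\delta_n b|u_n|^{p-2}u_n(v_n-u_n)$, obtaining $J_{p,n}(u_n)\le J_{p,n}(v_n)$, exactly as in Theorem~\ref{Equi}.

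For $(ii)\Longleftrightarrow(iii)$: the implication $(iii)\Rightarrow(ii)$ is an integration by parts via the pre-fractal Green formula, which turns $a_{p,n}(u_n,w)$ into $\int_{\Omega_n}(-\Delta_p)^s_{\Omega_n}u_n\,w\,\dx+\langle\Ncaln u_n,w\rangle+\delta_n\int_{\partial\Omega_n}b|u_n|^{p-2}u_nw\,\de\ell$ and then collapses it to $\int_{\Omega_n}f\,w\,\dx$ by the interior equations and the Robin identity of \eqref{StrongPn}. For $(ii)\Rightarrow(iii)$ one localizes: for $\zeta\in\D(\Omega^1_n)$ the variations $u_n\pm\epsilon\zeta$ are admissible for small $\epsilon$, and \eqref{DVpn} together with the Green formula gives $(-\Delta_p)^s_{\Omega_n}u_n=f$ in $\D'(\Omega^1_n)$, hence a.e.\ there by density; on the contact sets $\Omega^2_n$, $\Omega^3_n$ the identities with $(-\Delta_p)^s_{\Omega_n}\varphi_{1,n}$ and $(-\Delta_p)^s_{\Omega_n}\varphi_{2,n}$ follow from the Lewy--Stampacchia inequality of \cite[Theorem 2.4 and Remarks 3.3--3.4]{giglimosconi}, applied on the polygonal domain $\Omega_n$; finally, testing \eqref{DVpn} with $u_n\pm\epsilon\psi$ for $\psi\in\mathcal{K}^n$ and using the interior equation and the Green formula yields the Robin boundary condition $\langle\Ncaln u_n,\psi\rangle+\delta_n\int_{\partial\Omega_n}b|u_n|^{p-2}u_n\psi\,\de\ell=0$.

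The routine part is the pair of convexity computations in the first equivalence. The step deserving real care --- and the only place where the pre-fractal case is not a literal transcription of Theorem~\ref{Equi} --- is setting up the functional framework $V((-\Delta_p)^s_{\Omega_n},\Omega_n)$ and the operator $\Ncaln$: one must check that $(-\Delta_p)^s_{\Omega_n}u_n\in L^{p'}(\Omega_n)$ for the solution and that $\Ncaln$ is well defined and continuous on $W^{s-\frac1p,p}(K_n)$, so that all the integrations by parts above are legitimate. Since $\partial\Omega_n$ is Lipschitz this is more standard than its fractal analogue and is obtained by arguing segment-by-segment on $K_n$, following \cite{CLNODEA}; once it is in place, the rest is a transcription of the proof of Theorem~\ref{Equi}.
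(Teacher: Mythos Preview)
Your proposal is correct and takes essentially the same approach as the paper: the paper's own proof of Theorem~\ref{EquiN} consists of the single sentence ``The proof is completely analogous to the previous one,'' i.e.\ it defers entirely to the proof of Theorem~\ref{Equi} with the fractal data replaced by the pre-fractal data, which is exactly what you have spelled out in detail.
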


\proof The proof is completely analogous to the previous one.\endproof

\subsection{Uniqueness}

Thanks to the equivalent formulation \textit{(ii)} stated in Theorem \ref{Equi}, it is possible to prove the following uniqueness result for Problem \eqref{Pp}.
\begin{teo}\label{Uni}
Let us assume $2<sp<p$, $\mathcal{K}\neq\emptyset$ and that assumptions \eqref{GenHyp} and \eqref{HypOb} hold. Then Problem \eqref{Pp} admits a unique solution.
\end{teo}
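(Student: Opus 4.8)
The plan is to combine a standard existence argument with a strict-monotonicity argument exploiting the equivalent formulation \textit{(ii)} of Theorem \ref{Equi}. For existence, Proposition \ref{Prop1} already supplies all the ingredients of the direct method: $\mathcal{K}$ is nonempty (by hypothesis), closed and convex, hence weakly closed, while $J_p$ is convex, coercive and weakly lower semicontinuous on the reflexive space $W^{s,p}(\Omega)$. A minimizing sequence is therefore bounded (coercivity being most transparently read off the equivalent norm $\|\cdot\|_{b,s,p}$ of Lemma \ref{EquiNorm}), admits a weakly convergent subsequence whose limit lies in $\mathcal{K}$, and weak lower semicontinuity shows this limit is a minimizer. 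Thus the core of the statement is uniqueness.

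Suppose $u_1,u_2\in\mathcal{K}$ are both solutions of Problem \eqref{Pp}. By the equivalence $(i)\Leftrightarrow(ii)$ in Theorem \ref{Equi}, each of them solves the variational inequality \eqref{DVp}. I would test the inequality satisfied by $u_1$ with $v=u_2\in\mathcal{K}$, and the one satisfied by $u_2$ with $v=u_1\in\mathcal{K}$, and then add the two. Since $a_p(u,\cdot)$ is linear in its second argument and the two terms containing $f$ cancel, one is left with
\[
a_p(u_2,u_2-u_1)-a_p(u_1,u_2-u_1)\leq 0 .
\]
Expanding $a_p$ according to \eqref{ap}, the left-hand side is the sum of the Gagliardo-type double integral of $\bigl(|Du_2|^{p-2}Du_2-|Du_1|^{p-2}Du_1\bigr)\bigl(Du_2-Du_1\bigr)\,/\,|x-y|^{2+sp}$, where $Du_i:=u_i(x)-u_i(y)$, and of the boundary integral $\int_{\partial\Omega}b\,\bigl(|u_2|^{p-2}u_2-|u_1|^{p-2}u_1\bigr)(u_2-u_1)\dm$. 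By the elementary monotonicity inequality $(|a|^{p-2}a-|b|^{p-2}b)(a-b)\geq 0$ for $a,b\in\R$, $p>1$ — which is strict unless $a=b$ — both integrands are nonnegative, hence both integrals are nonnegative; being $\leq 0$ in total, both must vanish.

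From the vanishing of the double integral one gets $Du_1=Du_2$ for a.e. $(x,y)\in\Omega\times\Omega$, i.e.\ $u_1-u_2$ equals a constant $c$ a.e.\ in $\Omega$; in particular $|u_1-u_2|_{W^{s,p}(\Omega)}=0$. From the vanishing of the boundary integral, using again that $(|a|^{p-2}a-|b|^{p-2}b)(a-b)=0$ forces $a=b$ and that $b>0$ on $\overline\Omega$, one gets $u_1=u_2$ $\mu$-a.e.\ on $\partial\Omega$ in the trace sense; since the trace of the constant $c$ is $c$ itself and $\mu(\partial\Omega)>0$, this yields $c=0$. Equivalently, once both $|u_1-u_2|_{W^{s,p}(\Omega)}=0$ and $\int_{\partial\Omega}b\,|u_1-u_2|^p\dm=0$ are known, Lemma \ref{EquiNorm} gives $\|u_1-u_2\|_{W^{s,p}(\Omega)}=0$. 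Either way $u_1=u_2$, which proves uniqueness.

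The step I expect to be the real point of the argument, rather than a routine manipulation, is precisely this last one: the interior Gagliardo term is invariant under addition of constants, so by itself it only controls the seminorm and cannot distinguish $u$ from $u+c$; it is the Robin boundary term, through the strict positivity of $b$ and the nontriviality of $\mu$ (equivalently, through the equivalence of norms in Lemma \ref{EquiNorm}), that removes this degeneracy and closes the proof. This is also why the Robin condition cannot be dispensed with here, and the same scheme applies verbatim, via Theorem \ref{EquiN}, to Problem \eqref{Ppn}.
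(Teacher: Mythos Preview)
Your proof is correct and follows essentially the same scheme as the paper's: test each variational inequality with the other solution, add, exploit the monotonicity of $t\mapsto|t|^{p-2}t$, and close with the Robin term via Lemma \ref{EquiNorm}. The only cosmetic difference is that the paper invokes the quantitative bound $(|a|^{p-2}a-|b|^{p-2}b)(a-b)\geq c_p|a-b|^p$ for $p\geq 2$ (from \cite{LB}) to reach $\|u_1-u_2\|_{b,s,p}\leq 0$ in one step, whereas you use the qualitative strict monotonicity to force each nonnegative integral to vanish separately; both routes are valid, and your added existence paragraph is correct though the paper leaves it implicit in Proposition \ref{Prop1}.
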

\proof Thanks to the statement \textit{(ii)} in Theorem \ref{Equi}, we know that Problem \eqref{Pp} is equivalent to the  variational inequality \eqref{DVp}.\\
Let us consider $u_1, u_2\in\mathcal{K}$ solutions to Problem \eqref{DVp}. Hence, we rewrite \eqref{DVp}, first taking $u=u_1$ and $v=u_2$, and then taking $u=u_2$ and $v=u_1$. So, we get:
\begin{equation}\label{1}
\begin{aligned}
&\iint_{\Omega\times\Omega}\frac{|u_1(x)-u_1(y)|^{p-2}[u_1(x)-u_1(y)][w(x)-w(y)]}{|x-y|^{2+sp}}\dx\dy-\int_{\Omega}f(x)w(x)\dx\\
&+\int_{\partial\Omega}b(x)|u_1(x)|^{p-2}u_1(x)w(x)\dm\geq0, \text{ with } w(x)=u_2(x)-u_1(x),
\end{aligned}
\end{equation} 
and
\begin{equation}\label{2}
\begin{aligned}
&\iint_{\Omega\times\Omega}\frac{|u_2(x)-u_2(y)|^{p-2}[u_2(x)-u_2(y)][z(x)-z(y)]}{|x-y|^{2+sp}}\dx\dy-\int_{\Omega}f(x)z(x)\dx\\
&+\int_{\partial\Omega}b(x)|u_2(x)|^{p-2}u_2(x)z(x)\dm\geq0,  \text{ with } z(x)=u_1(x)-u_2(x),
\end{aligned}
\end{equation} 
Since $z(x)=-w(x)$, we can write \eqref{2} in the following form:
\begin{equation}\label{2bis}
\begin{aligned}
&-\iint_{\Omega\times\Omega}\frac{|u_2(x)-u_2(y)|^{p-2}[u_2(x)-u_2(y)][w(x)-w(y)]}{|x-y|^{2+sp}}\dx\dy+\int_{\Omega}f(x)w(x)\dx\\
&-\int_{\partial\Omega}b(x)|u_2(x)|^{p-2}u_2(x)w(x)\dm\geq0,  \text{ with } w(x)=u_2(x)-u_1(x).
\end{aligned}
\end{equation} 
Summing \eqref{1} ad \eqref{2bis} and changing the sign, we have:
\begin{equation}\label{3}
\begin{aligned}
&\iint_{\Omega\times\Omega}\frac{\{|u_2(x)-u_2(y)|^{p-2}[u_2(x)-u_2(y)]-|u_1(x)-u_1(y)|^{p-2}[u_1(x)-u_1(y)]\}[w(x)-w(y)]}{|x-y|^{2+sp}}\dx\dy\\
&+\int_{\partial\Omega}b(x)\left[|u_2(x)|^{p-2}u_2(x)-|u_1(x)|^{p-2}u_1(x)\right][u_2(x)-u_1(x)]\dm\leq0, \text{ with } w(x)=u_2(x)-u_1(x).
\end{aligned}
\end{equation} 
Now, applying the second inequality of Lemma 2.1 in \cite{LB} (with $\delta=p-2>0$), we obtain, for some constants $M_1,M_2>0$,
\begin{equation}\label{4}
M_1\iint_{\Omega\times\Omega}\frac{|[u_2(x)-u_1(x)]-[u_2(y)-u_1(y)]|^{p}}{|x-y|^{2+sp}}\dx\dy+M_2\int_{\partial\Omega}b(x)|u_1(x)-u_2(x)|^{p}\dm\leq0.
\end{equation} 
Then, we have
\begin{equation}\label{5}
C\left[|u_2(x)-u_1(x)|_{W^{s,p}(\Omega)}+\int_{\partial\Omega}b(x)|u_1(x)-u_2(x)|^{p}\dm\right]\leq0, \text{ with } 0<C=\min\{M_1,M_2\}.
\end{equation}
Finally, by Lemma \ref{EquiNorm}, we deduce that 
$$\|u_2-u_1\|_{b,s,p}\leq0\Longleftrightarrow\|u_2-u_1\|_{b,s,p}=0\Longleftrightarrow u_1=u_2.$$
\endproof

Thanks to Theorem \ref{EquiN}, the following analogous uniqueness result can be easily proved for Problem \eqref{Ppn}.
\begin{teo}\label{UniN}
Let us assume $2<sp<p$, $\mathcal{K}^n\neq\emptyset$ and that assumptions \eqref{GenHyp} and \eqref{HypObn} hold. Then Problem \eqref{Ppn} admits a unique solution.
\end{teo}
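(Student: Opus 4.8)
The plan is to mirror the proof of Theorem \ref{Uni} step by step, replacing the fractal setting with the pre-fractal one. First I would invoke Theorem \ref{EquiN}, statement \emph{(ii)}, to reduce Problem \eqref{Ppn} to the variational inequality \eqref{DVpn}. Existence of at least one solution follows from Proposition \ref{Prop1} together with \eqref{GenHyp} and \eqref{HypObn}: $J_{p,n}$ is convex, coercive and weakly lower semi-continuous, and $\mathcal{K}^n$ is closed, convex and nonempty, so a minimizer exists by the direct method. It then suffices to prove that the solution of \eqref{DVpn} is unique.

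Next I would take two solutions $u_{1,n},u_{2,n}\in\mathcal{K}^n$ of \eqref{DVpn}. Writing the inequality \eqref{DVpn} first with $u_n=u_{1,n}$, $v_n=u_{2,n}$, and then with $u_n=u_{2,n}$, $v_n=u_{1,n}$, and adding the two (after the sign change, exactly as in the passage from \eqref{2} to \eqref{2bis}), the linear terms in $f$ cancel and one is left with
\begin{equation}\notag
\begin{aligned}
&\iint_{\Omega_n\times\Omega_n}\frac{\{|u_{2,n}(x)-u_{2,n}(y)|^{p-2}[u_{2,n}(x)-u_{2,n}(y)]-|u_{1,n}(x)-u_{1,n}(y)|^{p-2}[u_{1,n}(x)-u_{1,n}(y)]\}[w(x)-w(y)]}{|x-y|^{2+sp}}\dx\dy\\
&+\delta_n\int_{\partial\Omega_n}b(x)\left[|u_{2,n}(x)|^{p-2}u_{2,n}(x)-|u_{1,n}(x)|^{p-2}u_{1,n}(x)\right][u_{2,n}(x)-u_{1,n}(x)]\,\de\ell\leq0,
\end{aligned}
\end{equation}
with $w:=u_{2,n}-u_{1,n}$. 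Here $a_{p,n}$ as defined in \eqref{apn} is the relevant bilinear-type form.

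Then I would apply the elementary monotonicity estimate (the second inequality of Lemma 2.1 in \cite{LB}, with $\delta=p-2>0$, since $p>2$ because $sp>2$ and $s<1$) to both the double integral and the boundary integral; since $\delta_n>0$ and $b>0$ on $\overline\Omega$, this gives constants $M_1,M_2>0$ with
\begin{equation}\notag
M_1\iint_{\Omega_n\times\Omega_n}\frac{|w(x)-w(y)|^{p}}{|x-y|^{2+sp}}\dx\dy+M_2\,\delta_n\int_{\partial\Omega_n}b(x)|w(x)|^{p}\,\de\ell\leq0.
\end{equation}
This forces $|w|_{W^{s,p}(\Omega_n)}=0$ and $\int_{\partial\Omega_n}b|w|^p\,\de\ell=0$. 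Finally, invoking the analogue of Lemma \ref{EquiNorm} for $W^{s,p}(\Omega_n)$ (which the excerpt states holds, the equivalent norm being $\big(|u|_{W^{s,p}(\Omega_n)}^p+\delta_n\int_{\partial\Omega_n}b|u|^p\,\de\ell\big)^{1/p}$), we conclude $\|w\|_{W^{s,p}(\Omega_n)}=0$, hence $u_{1,n}=u_{2,n}$.

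I do not expect any serious obstacle here: the argument is word-for-word the same as Theorem \ref{Uni}, with $\Omega$ replaced by $\Omega_n$, $\dm$ by $\delta_n\,\de\ell$, and $\mathcal{K}$ by $\mathcal{K}^n$. The only mild point of care is that the equivalent-norm lemma must be applied with the weight $\delta_n b$ rather than $b$; since $\delta_n$ is a fixed positive constant this changes nothing, and the constant in the norm equivalence may depend on $n$ (which is irrelevant for a uniqueness statement at fixed $n$). One should also note that the strict inequality $p>2$ needed for the cited monotonicity inequality is guaranteed by the standing assumption $2<sp<p$.
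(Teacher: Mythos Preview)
Your proposal is correct and follows exactly the approach the paper intends: the paper's own proof of Theorem~\ref{UniN} reads in its entirety ``The proof is completely analogous to the previous one,'' and you have faithfully spelled out that analogy, including the appropriate replacements $\Omega\to\Omega_n$, $\dm\to\delta_n\,\de\ell$, and $\mathcal{K}\to\mathcal{K}^n$. Your remarks about the equivalent norm on $W^{s,p}(\Omega_n)$ with weight $\delta_n b$ and about $p>2$ following from $2<sp<p$ are accurate and address the only minor points requiring care.
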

\proof The proof is completely analogous to the previous one.\endproof

\section{Asymptotic results}\label{AR}
\setcounter{section}{3} \setcounter{equation}{0}

In this section we investigate the asymptotic behavior of the solutions to the obstacle problems both as $p\to+\infty$ and $n\to+\infty$.\\
From now on we will use the following notations:
$$\|u\|_{L^p(\Omega)}=\|u\|_p \text{ and } \|u\|_{L^{\infty}(\Omega)}=\|u\|_{\infty},$$
$$\|u\|_{L^p(\Omega_n)}=\|u\|_{p,n}\text{ and } \|u\|_{L^{\infty}(\Omega_n)}=\|u\|_{\infty,n},$$
$$\|u\|_{W^{s,p}(\Omega)}=\|u\|_{s,p}\text{ and } \|u\|_{W^{s,p}(\Omega_n)}=\|u\|_{s,p,n},$$
$$|u|_{W^{s,p}(\Omega)}=|u|_{s,p}\text{ and } |u|_{W^{s,p}(\Omega_n)}=|u|_{s,p,n},$$
$$(u,v)_{2,s,p}=(u,v)_{s,p}.$$

%

\subsection{Asymptotics for $p\to+\infty$}

In this subsection we study the asymptotic behavior of the solutions to Problems \eqref{Pp} and \eqref{Ppn}, as $p\to+\infty$. To this aim, we suppose $\varphi_1,\varphi_2\in W^{s,p}(\Omega)\cap C(\overline{\Omega})$ and $\varphi_{1,n},\varphi_{2,n}\in W^{s,p}(\Omega)\cap C(\overline{\Omega})$ for every $n\in\N$. Moreover, we assume that
\begin{equation}\notag
\max\{\|\varphi_1\|_{C(\overline{\Omega})},\|\varphi_2\|_{C(\overline{\Omega})}\}\leq 1,
\end{equation}
and, for every $n\in\N$,
\begin{equation}\notag
\max\{\|\varphi_{1,n}\|_{C(\overline{\Omega})},\|\varphi_{2,n}\|_{C(\overline{\Omega})}\}\leq 1.
\end{equation}

Since it will be useful in the following, let us define the set
\begin{equation}\label{Kinfty}
\mathcal{K}_{\infty}=\{v\in W^{s,\infty}(\Omega)\,:\,\varphi_1\leq v\leq\varphi_2 \text{ in }\Omega \text{ with } \|v\|_{s,\infty}\leq1 \}.
\end{equation}
The following results hold.
\begin{teo}\label{Ap}
Let us assume $\mathcal{K}_{\infty}\neq\emptyset$. Let $f\in L^1(\Omega)$ and $\varphi_1,\varphi_2\in W^{s,p}(\Omega)\cap C(\overline{\Omega})$. Then, the set $\{u_p\,:\,u_p \text{ minimizer to Problem } \eqref{Pp}\}$, with $p$ such that $p>sp>2$, is precompact in $C(\overline{\Omega})$. Moreover $u_p\to u_{\infty}$ weakly in $W^{s,q}(\Omega)$, for every $q$ such that $q>sq>2$, with $u_{\infty}$ maximizer to Problem
$$\max\left\{\int_{\Omega}f(x)v(x)\dx\,:\,v\in\mathcal{K}_{\infty}\right\}.$$
\end{teo}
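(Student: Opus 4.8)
The plan is to run a standard $p\to+\infty$ compactness-and-convergence argument, exploiting the uniform H\"older bounds provided by Theorem \ref{immsob} and the fact that the normalization in $J_p$ makes the Gagliardo seminorm appear with an $L^p$-type scaling. First I would obtain a uniform bound for the minimizers: since $\mathcal{K}_\infty\neq\emptyset$, fix any $v\in\mathcal{K}_\infty$; then $v\in\mathcal{K}$ and, by minimality, $J_p(u_p)\leq J_p(v)$. Using $\|v\|_{s,\infty}\leq 1$ (so $|v(x)-v(y)|\leq|x-y|^s$, hence the double integral defining the seminorm term is bounded by $\iint_{\Omega\times\Omega}|x-y|^{-2+sp-sp}\dx\dy=\iint_{\Omega\times\Omega}|x-y|^{-2}\dx\dy$... wait, more carefully $|v(x)-v(y)|^p/|x-y|^{2+sp}\leq |x-y|^{sp-2-sp}=|x-y|^{-2}$, which is not integrable), I instead bound $|v(x)-v(y)|\leq 2\|v\|_\infty\wedge |x-y|^s\leq \diam(\Omega)^{-\eta}|x-y|^{s-\eta}$-type estimate, or more simply use that $W^{s,\infty}(\Omega)\hookrightarrow W^{s',p}(\Omega)$ for $s'<s$ and a direct estimate gives $|v|_{s,p}^p\leq C(s,p,\Omega)$ with $C^{1/p}$ bounded as $p\to\infty$. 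Combined with $\|f\|_{L^1}$ controlling $\int_\Omega f v\dx$ (since $\|v\|_\infty\leq 1$) and the boundary term being controlled by $\|b\|_{C(\overline\Omega)}\mu(K)$, this yields $\frac1p|u_p|_{s,p}^p\leq C$ uniformly, hence (together with the obstacle constraint $\varphi_1\leq u_p\leq\varphi_2$ giving $\|u_p\|_\infty\leq 1$) a bound $\|u_p\|_{s,p}^p\leq Cp$, i.e. $\|u_p\|_{s,p}\leq (Cp)^{1/p}\to 1$.

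Next I would upgrade this to a uniform bound in a fixed space: fix $q$ with $q>sq>2$ and $q<p$; by H\"older's inequality in the Gagliardo double integral (with exponent $p/q$) one gets $|u_p|_{s,q}^q\leq |u_p|_{s,p}^q\cdot\big(\iint_{\Omega\times\Omega}|x-y|^{-2}\dx\dy\big)^{1-q/p}$ — but that integral diverges, so instead I would split the domain of integration at $|x-y|=1$ and use the correct exponent $2+sq=(2+sp)\frac{q}{p}+2\big(1-\frac qp\big)$... the clean route is: $|u_p|^q_{W^{s,q}(\Omega)}=\iint |u_p(x)-u_p(y)|^q|x-y|^{-(2+sq)}$ and write $|x-y|^{-(2+sq)}=|x-y|^{-(2+sp)q/p}\cdot|x-y|^{-2(1-q/p)-s(q-q)}$; applying H\"older with exponents $p/q$ and $p/(p-q)$ gives $|u_p|^q_{s,q}\leq |u_p|^q_{s,p}\,\big(\iint_{\Omega\times\Omega}|x-y|^{-\frac{2p}{p-q}\cdot\frac{p-q}{p}\cdot\frac{1}{1}}\big)^{(p-q)/p}$, and one checks the exponent in that remaining integral is $-2$, still non-integrable on a $2$-dimensional set near the diagonal. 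The honest fix is to replace $\Omega\times\Omega$ by $\{|x-y|<1\}$ using $\diam(\Omega)<\infty$, absorb the far part into the $L^\infty$ bound, and note $\iint_{|x-y|<R,\,x\in\Omega}|x-y|^{-2+\epsilon}<\infty$ for any $\epsilon>0$; choosing the H\"older split so that a positive power of $|x-y|$ survives (possible since $sq<sp$ strictly when $q<p$, giving room) yields $|u_p|_{s,q}\leq C(q,\Omega)\,|u_p|_{s,p}^{q/p}\cdot C^{(p-q)/p}$, hence $\limsup_{p\to\infty}\|u_p\|_{s,q}<\infty$, in fact $\leq 1$ in the limit. Then by Theorem \ref{immsob} with exponent $q$, $\{u_p\}$ is bounded in $C^{0,\alpha_q}(\overline\Omega)$ with $\alpha_q=(sq-2)/q>0$, so by Arzel\`a–Ascoli it is precompact in $C(\overline\Omega)$, and also (being bounded in the reflexive $W^{s,q}(\Omega)$) admits a subsequence converging weakly in $W^{s,q}(\Omega)$; the two limits coincide, call it $u_\infty$.

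Then I would identify $u_\infty$ as a maximizer. First, $u_\infty\in\mathcal{K}_\infty$: the pointwise obstacle inequalities $\varphi_1\leq u_p\leq\varphi_2$ pass to the uniform limit; and $\|u_\infty\|_{s,\infty}\leq 1$ follows from $|u_p(x)-u_p(y)|\leq \|u_p\|_{s,p}\,\text{(local seminorm contribution)}$ — more precisely, for any fixed $x\neq y$, Morrey-type estimate gives $|u_p(x)-u_p(y)|\leq C(s,q,\Omega)\,|u_p|_{s,q}\,|x-y|^{(sq-2)/q}$; letting first $p\to\infty$ (so $|u_p|_{s,q}\to$ bounded) and then $q\to\infty$ forces $|u_\infty(x)-u_\infty(y)|\leq|x-y|^s$, i.e. $\|u_\infty\|_{s,\infty}\leq 1$ together with $\|u_\infty\|_\infty\leq 1$. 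Hence $u_\infty$ is admissible for the limit problem. For optimality, take any $v\in\mathcal{K}_\infty$; it is admissible for $J_p$, so $J_p(u_p)\leq J_p(v)$, i.e. $\frac1p|u_p|^p_{s,p}-\int_\Omega f u_p+\frac1p\int_{\partial\Omega}b|u_p|^p\dm\leq \frac1p|v|^p_{s,p}-\int_\Omega f v+\frac1p\int_{\partial\Omega}b|v|^p\dm$. All the $\frac1p(\cdot)$ terms are bounded uniformly (as shown) and hence vanish... no — the seminorm term on the left is nonnegative and the right-hand $\frac1p(\cdot)$ terms tend to $0$ because $|v|_{s,p}^{1/p}$ and $(\int b|v|^p\dm)^{1/p}$ are bounded (they converge to $\|v\|_{s,\infty}\leq 1$ and to an $L^\infty$-type quantity $\leq\|b\|_\infty^{0}\sup|v|\leq 1$). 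Dropping the nonnegative seminorm and boundary terms on the left gives $-\int_\Omega f u_p\leq \frac1p|v|^p_{s,p}-\int_\Omega f v+\frac1p\int_{\partial\Omega}b|v|^p\dm$; letting $p\to\infty$ and using $u_p\to u_\infty$ uniformly (so $\int_\Omega f u_p\to\int_\Omega f u_\infty$, as $f\in L^1$ and $\|u_p\|_\infty\leq 1$), we obtain $-\int_\Omega f u_\infty\leq -\int_\Omega f v$, i.e. $\int_\Omega f u_\infty\geq\int_\Omega f v$ for every $v\in\mathcal{K}_\infty$. Since $u_\infty\in\mathcal{K}_\infty$, this says exactly that $u_\infty$ maximizes $\int_\Omega f v$ over $\mathcal{K}_\infty$. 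Finally, a standard subsequence argument (every subsequence has a sub-subsequence converging to a maximizer) gives the full convergence statement.

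\textbf{Main obstacle.} The delicate point is the uniform-in-$p$ control of the fractional seminorms and the passage from the scale-$p$ bound $\frac1p|u_p|_{s,p}^p\leq C$ to a genuine bound in a fixed space $W^{s,q}(\Omega)$ with constants that do not blow up — this requires a careful H\"older-interpolation of the Gagliardo double integral, splitting near and far from the diagonal and keeping a strictly positive power of $|x-y|$ so that the residual kernel is integrable on the $2$-dimensional domain; the extension property (Theorem \ref{teo estensione}) and the embedding (Theorem \ref{immsob}) must be invoked with constants independent of $p$ in the relevant range $sp>2$, which is why the hypothesis $p>sp>2$ and the normalization of the operator are essential.
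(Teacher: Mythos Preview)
Your plan coincides with the paper's proof step for step: test $J_p(u_p)\le J_p(v)$ against a fixed $v\in\mathcal K_\infty\subset\mathcal K$, use the resulting inequality to bound $|u_p|_{s,p}$, H\"older--interpolate in the Gagliardo double integral to a fixed $W^{s,q}$ with $2<sq<q<p$, extract a weak $W^{s,q}$/uniform limit $u_\infty$ (the paper invokes exactly the reflexivity-plus-Theorem~\ref{immsob} mechanism you describe), let $q\to\infty$ to get $\|u_\infty\|_{s,\infty}\le 1$, and finally drop the nonnegative $\tfrac1p$-terms in $J_p(u_p)\le J_p(v)$ to conclude $\int_\Omega f\,u_\infty\ge\int_\Omega f\,v$ for every $v\in\mathcal K_\infty$.

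On what you flag as the main obstacle: the paper's proof writes exactly the quantity $\iint_{\Omega\times\Omega}|x-y|^{-2}\dx\dy$ and treats it as a finite constant $C$ --- both when bounding $|v|_{s,p}^p$ for $v\in\mathcal K_\infty$ and in the H\"older step, where this integral appears with exponent $\tfrac1q-\tfrac1p$ and one then uses $C^{1/q}\to 1$. So the divergence you worry about is present verbatim in the published argument; you are being more scrupulous, not less. Do note, however, that your own suggested patch --- that the strict inequality $sq<sp$ ``gives room'' in the H\"older split at the \emph{same} $s$ --- does not work: equating exponents via $(2+sp)\tfrac qp+\beta\,\tfrac{p-q}{p}=2+sq$ forces $\beta=2$ exactly, with no slack. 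An honest repair has to sacrifice a little fractional order (interpolate to $W^{s',q}$ with $s'<s$, so that $\beta<2$ and the residual kernel is integrable) and then recover the $C^{0,s}$ bound on $u_\infty$ by letting $s'\uparrow s$ together with $q\to\infty$.
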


\proof Let $u_p\in\mathcal{K}$ be the solution to Problem \eqref{Pp}. To reach our goal, we need the evaluate the norm of $u_p$ in $W^{s,p}(\Omega)$.
Since $$\min_{\substack{\overline{\Omega}}}\varphi_1\leq \varphi_1\leq u_p\leq \varphi_2\leq\max_{\substack{\overline{\Omega}}}\varphi_2,$$
we deduce that, under the hypotheses on the obstacles,
$$\|u_p\|_p\leq |\Omega|^\frac{1}{p}\,\|u_p\|_{\infty}\leq |\Omega|^\frac{1}{p}.$$
Now, in order to complete the estimate, let us consider $v\in\mathcal{K}_{\infty}\subset\mathcal{K}$. We have that:
\begin{equation}\notag
\begin{split}
J_p(u_p)&\leq J_p(v)\Longleftrightarrow\frac{1}{p}\iint_{\Omega\times\Omega}\frac{|u_p(x)-u_p(y)|^p}{|x-y|^{2+sp}}\dx\dy-\int_{\Omega}f(x)u_p(x)\dx+\frac{1}{p}\int_{\partial\Omega}b(x)|u_p(x)|^{p}\dm\\[2mm]
&\leq\frac{1}{p}\iint_{\Omega\times\Omega}\frac{|v(x)-v(y)|^p}{|x-y|^{2+sp}}\dx\dy-\int_{\Omega}f(x)v(x)\dx+\frac{1}{p}\int_{\partial\Omega}b(x)|v(x)|^{p}\dm.
\end{split}
\end{equation}
This implies that
\begin{equation}\notag
\begin{split}
|u_p|^p_{s,p}&=\iint_{\Omega\times\Omega}\frac{|u_p(x)-u_p(y)|^p}{|x-y|^{2+sp}}\dx\dy\leq\iint_{\Omega\times\Omega}\frac{|v(x)-v(y)|^p}{|x-y|^{2+sp}}\dx\dy+p\int_{\Omega}f(x)[u_p(x)-v(x)]\dx\\[2mm]
&+\int_{\partial\Omega}b(x)\left[|v(x)|^{p}-|u_p(x)|^p\right]\dm\leq C\iint_{\Omega\times\Omega}\frac{1}{|x-y|^{2}}\dx\dy\\[2mm]
&+p\|f\|_1\|\varphi_2-\varphi_1\|_{\infty}+C\|b\|_{C(\overline\Omega)}\|v\|_{s,p}^p\leq\left[C_1+p\|f\|_1C_2+C_3\right],
\end{split}
\end{equation}
for some positive constants $C_1$, $C_2$ and $C_3$ independent from $p$. Then:
$$|u_p|_{s,p}\leq\left[C_1+p\|f\|_1C_2+C_3\right]^{\frac{1}{p}}\quad \forall\,p \text{ such that } p>sp>2.$$
So, considering $q$ such that $2<sq<q<p$, we have
$$|u_p|_{s,q}=\left(\iint_{\Omega\times\Omega}\frac{|u_p(x)-u_p(y)|^q}{|x-y|^{2+sq}}\dx\dy\right)^\frac{1}{q}\leq\left[\iint_{\Omega\times\Omega}\frac{|u_p(x)-u_p(y)|^p}{|x-y|^{2+sp}}\dx\dy\right]^{\frac{1}{p}}\cdot\left[ \iint_{\Omega\times\Omega}\frac{1}{|x-y|^2}\dx\dy\right]^{\frac{1}{q}-\frac{1}{p}}$$
$$\leq|u_p|_{s,p}C^{\frac{1}{q}-\frac{1}{p}}\leq\left[C_1+p\|f\|_1C_2+C_3\right]^{\frac{1}{p}}C^{\frac{1}{q}-\frac{1}{p}}.$$
Hence, we get:
$$\limsup_{p\to+\infty}|u_p|_{s,q}\leq C^{\frac{1}{q}}.$$
This, together with the hypotheses on the obstacles, implies that
$$\limsup_{p\to+\infty}\|u_p\|_{s,q}\leq C^{\frac{1}{q}},$$
with $C$ independent on $p$. Hence, there exists a subsequence, denoted by $u_{p_m}$, such that, for $m\to+\infty$,
$$u_{p_m}\to  u_{\infty} \text{ weakly in } W^{s,q}(\Omega) \text{ and uniformly in } C(\overline{\Omega}).$$
So, we have:
$$\|u_{\infty}\|_{s,\infty}=\lim_{q\to+\infty}\|u_{\infty}\|_{s,q}\leq\lim_{q\to+\infty}\liminf_{p\to+\infty}\|u_p\|_{s,q}\leq\lim_{q\to+\infty}\limsup_{p\to+\infty}\|u_p\|_{s,q}\leq\lim_{q\to+\infty}C^{\frac{1}{q}}=1.$$
And then, $u_{\infty}\in\mathcal{K}_{\infty}$.\\
Finally, again by the fact that $u_p$ solves Problem \eqref{Pp}, for every $v\in\mathcal{K}_{\infty}$, we have:
\begin{equation}\notag
\begin{split}
&-\int_{\Omega}f(x)u_p(x)\dx\leq\frac{1}{p}\iint_{\Omega\times\Omega}\frac{|u_p(x)-u_p(y)|^p}{|x-y|^{2+sp}}\dx\dy-\int_{\Omega}f(x)u_p(x)\dx+\frac{1}{p}\int_{\partial\Omega}b(x)|u_p(x)|^{p}\dm\\[2mm]
&\leq\frac{1}{p}\iint_{\Omega\times\Omega}\frac{|v(x)-v(y)|^p}{|x-y|^{2+sp}}\dx\dy-\int_{\Omega}f(x)v(x)\dx+\frac{1}{p}\int_{\partial\Omega}b(x)|v(x)|^{p}\dm\leq\frac{C}{p}-\int_{\Omega}f(x)v(x)\dx.
\end{split}
\end{equation}
So, considering the first and the last term in this chain on inequalities and passing to the limit as $p\to+\infty$ in them, we get
$$\lim_{p\to+\infty}\left(-\int_{\Omega}f(x)u_p(x)\dx\right)\leq\lim_{p\to+\infty}\left(\frac{C}{p}-\int_{\Omega}f(x)v(x)\dx\right),$$
that is
$$\int_{\Omega}f(x)u_{\infty}(x)\dx\geq\int_{\Omega}f(x)v(x)\dx, \forall v\in\mathcal{K}_{\infty}.$$
\endproof

We now consider the pre-fractal problems \eqref{Ppn}, for $n\in\N$ fixed. Coherently with the fractal case, we set
\begin{equation}\label{Kinftyn}
\mathcal{K}^n_{\infty}=\{v\in W^{s,\infty}(\Omega_n)\,:\,\varphi_{1,n}\leq v\leq\varphi_{2,n} \text{ in }\Omega_n \text{ with } \|v\|_{s,\infty,n}\leq1\}.
\end{equation}

\begin{teo}\label{Apn}
Let us assume $\mathcal{K}^n_{\infty}\neq\emptyset$. Let $f\in L^1(\Omega)$ and $\varphi_{1,n},\varphi_{2,n}\in W^{s,p}(\Omega)\cap C(\overline{\Omega})$ for every $n\in\N$. Then, the set $\{u_{p,n}\,:\,u_{p,n} \text{ minimizer to Problem } \eqref{Ppn}\}$, with $p$ such that $p>sp>2$, is precompact in $C(\overline{\Omega}_n)$. Moreover $u_{p,n}\to u_{\infty,n}$ weakly in $W^{s,q}(\Omega_n)$, for every $q$ such that $q>sq>2$, with $u_{\infty,n}$ maximizer to Problem
$$\max\left\{\int_{\Omega_n}f(x)v(x)\dx\,:\,v\in\mathcal{K}^n_{\infty}\right\}.$$
\end{teo}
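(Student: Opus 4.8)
\textbf{Proof plan for Theorem \ref{Apn}.} The strategy is to mimic, step by step, the proof of Theorem \ref{Ap}, simply replacing $\Omega$ by the fixed pre-fractal domain $\Omega_n$ and the functional $J_p$ by $J_{p,n}$, and to keep track of the fact that $n$ is fixed so that all geometric constants (the extension constant $\overline{C}_s$, the Sobolev embedding constant, the measure of $\Omega_n$, the positive constant $\delta_n$, and $\|b\|_{C(\overline{\Omega}_n)}$) are harmless constants independent of $p$. First I would fix $n$, take $u_{p,n}\in\mathcal{K}^n$ the unique minimizer (existence and uniqueness given by Proposition \ref{Prop1} and Theorem \ref{UniN}), and observe that, since $\varphi_{1,n}\leq u_{p,n}\leq\varphi_{2,n}$ pointwise in $\Omega_n$ and $\max\{\|\varphi_{1,n}\|_{C(\overline{\Omega})},\|\varphi_{2,n}\|_{C(\overline{\Omega})}\}\leq 1$, one gets $\|u_{p,n}\|_{\infty,n}\leq 1$ and hence $\|u_{p,n}\|_{p,n}\leq|\Omega_n|^{1/p}$.

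Next, to control the Gagliardo seminorm, I would compare $J_{p,n}(u_{p,n})$ with $J_{p,n}(v)$ for an arbitrary $v\in\mathcal{K}^n_{\infty}\subset\mathcal{K}^n$: this yields
\begin{equation}\notag
|u_{p,n}|^p_{s,p,n}\leq\iint_{\Omega_n\times\Omega_n}\frac{|v(x)-v(y)|^p}{|x-y|^{2+sp}}\dx\dy+p\int_{\Omega_n}f(x)[u_{p,n}(x)-v(x)]\dx+\delta_n\int_{\partial\Omega_n}b(x)\left[|v(x)|^p-|u_{p,n}(x)|^p\right]\de\ell.
\end{equation}
Using $|x-y|^{-2-sp}\leq|x-y|^{-2}$ on the bounded set $\Omega_n\times\Omega_n$ (together with $\|v\|_{s,\infty,n}\leq 1$, so the first integral is bounded by $C$ uniformly in $p$), $\|f\|_{1}\|u_{p,n}-v\|_{\infty,n}\leq 2\|f\|_1$, and $\delta_n\|b\|_{C(\overline{\Omega}_n)}\,|\partial\Omega_n|$ for the boundary term, I obtain $|u_{p,n}|^p_{s,p,n}\leq C_1+pC_2+C_3$ with $C_1,C_2,C_3$ independent of $p$ (but possibly depending on $n$, which is fine). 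Taking $p$-th roots gives $|u_{p,n}|_{s,p,n}\leq(C_1+pC_2+C_3)^{1/p}$. Then, exactly as in Theorem \ref{Ap}, for any $q$ with $2<sq<q<p$ I apply Hölder's inequality in the measure $|x-y|^{-2}\dx\dy$ on $\Omega_n\times\Omega_n$ to get $|u_{p,n}|_{s,q,n}\leq|u_{p,n}|_{s,p,n}\,C^{1/q-1/p}$, whence $\limsup_{p\to+\infty}\|u_{p,n}\|_{s,q,n}\leq C^{1/q}$.

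From this uniform bound and the compact embedding of $W^{s,q}(\Omega_n)$ into $C(\overline{\Omega}_n)$ (Theorem \ref{immsob}, valid since $sq>2$ and $\Omega_n$ is an extension domain with no external cusps), I extract a subsequence $u_{p_m,n}$ converging weakly in $W^{s,q}(\Omega_n)$ and uniformly in $C(\overline{\Omega}_n)$ to some $u_{\infty,n}$; letting $q\to+\infty$ in the chain $\|u_{\infty,n}\|_{s,q,n}\leq\liminf_p\|u_{p,n}\|_{s,q,n}\leq C^{1/q}$ gives $\|u_{\infty,n}\|_{s,\infty,n}\leq 1$, while the uniform convergence and the pointwise obstacle constraints pass to the limit, so $u_{\infty,n}\in\mathcal{K}^n_{\infty}$. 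Finally, for every $v\in\mathcal{K}^n_{\infty}$ the minimality inequality gives $-\int_{\Omega_n}f u_{p,n}\dx\leq \frac{C}{p}-\int_{\Omega_n}f v\dx$; passing to the limit along the subsequence (using $u_{p_m,n}\to u_{\infty,n}$ uniformly and $f\in L^1$) yields $\int_{\Omega_n}f u_{\infty,n}\dx\geq\int_{\Omega_n}f v\dx$ for all $v\in\mathcal{K}^n_{\infty}$, i.e. $u_{\infty,n}$ is a maximizer. The only genuinely new point compared with Theorem \ref{Ap} is bookkeeping the $\delta_n$-weighted boundary term, which is routine since $n$ is fixed; no step is a real obstacle, and I would simply write ``the proof is analogous to that of Theorem \ref{Ap}, with the obvious modifications to account for $\Omega_n$ and $\delta_n$.''
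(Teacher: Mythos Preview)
Your proposal is correct and follows exactly the same approach as the paper, which simply states that ``the proof is completely analogous to the one of Theorem \ref{Ap}.'' Your detailed bookkeeping of the $\delta_n$-weighted boundary term and the $n$-dependent constants is precisely the ``obvious modifications'' the paper leaves implicit.
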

\proof 
The proof is completely analogous to the one of Theorem \ref{Ap}.
\endproof

\subsection{Asymptotics for $n\to+\infty$ and $p<\infty$}

Along with the problems in the fractal boundary domain $\Omega$, we can also consider the analogous problems in the pre-fractal approximating domains $\Omega_n$.\\ 
In particular, let $n$ be a fixed positive integer and $\delta_n:=\left(\frac{3}{4}\right)^n$. Given $f\in L^1(\Omega)$, we recall the pre-fractal obstacle problems stated in \eqref{DVpn}: for every $n\in\N$,
\begin{equation}\notag
\text{find }  u_n\in\mathcal{K}^n\,\,:\,\,a_{p,n}(u_n,v_n-u_n)-\int_{\Omega_n}f(x)\left(v_n(x)-u_n(x)\right)\dx\geq 0\quad \forall\,v_n\in\mathcal{K}^n.
\end{equation} 

\bigskip

Analogously to before, we consider the problems
\begin{equation}\label{Pn}
\int_{\Omega_n}u_{\infty,n}(x)f(x)\dx=\max\left\{\int_{\Omega_n}w(x)f(x)\dx\,:\,w\in\mathcal{K}^n_{\infty}\right\}
\end{equation}
and
\begin{equation}\label{P}
\int_{\Omega}u_{\infty}(x)f(x)\dx=\max\left\{\int_{\Omega}w(x)f(x)\dx\,:\,w\in\mathcal{K}_{\infty}\right\}.
\end{equation}

We will need the following convergence result for the boundary term in \eqref{Jpn}. The proof follows at once by adapting the one of Proposition 2.5 in \cite{JCA}.

\begin{prop}\label{prop JCA}
Let $v_n\rightharpoonup u$ in $W^{s,p}(\Omega)$ and $b\in C(\overline\Omega)$. Then $$\displaystyle\delta_n\int_{\partial\Omega_n} b |v_n|^p\,\de\ell \xrightarrow[n\to+\infty]{}\int_{\partial\Omega} b |u|^p\,\de\mu.$$
\end{prop}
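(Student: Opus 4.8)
The statement to establish is Proposition~\ref{prop JCA}: if $v_n \rightharpoonup u$ in $W^{s,p}(\Omega)$ and $b \in C(\overline\Omega)$, then $\delta_n \int_{\partial\Omega_n} b\,|v_n|^p\,\de\ell \to \int_{\partial\Omega} b\,|u|^p\,\de\mu$ as $n\to+\infty$, where $\delta_n = (3/4)^n$. The guiding idea is that $\delta_n\,\de\ell$ on $K_n$ is precisely the renormalization that makes the one-dimensional arc-length measures on the pre-fractal curves converge weakly-$*$ (as measures on $\overline\Omega$) to the $d_f$-dimensional Hausdorff measure $\mu$ on $K$: each of the $3\cdot 4^n$ segments of $K_n$ has length $(1/3)^n$ (up to the overall scale of the snowflake), so $\delta_n$ times total length is $(3/4)^n \cdot 3\cdot 4^n \cdot (1/3)^n = 3$, matching $\mu(K) = 3$ by \eqref{eq:1}. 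The proof then splits the difference into a ``measure convergence'' part and a ``trace convergence'' part.

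\textbf{Step 1: reduce to a fixed test function.} First I would use the uniform extension operator of Theorem~\ref{teo estensione}: since $v_n \rightharpoonup u$ in $W^{s,p}(\Omega)$, the extensions $\estdom v_n$ are bounded in $W^{s,p}(\R^2)$ by \eqref{R-3d} with a constant independent of $n$, hence (up to a subsequence) converge weakly in $W^{s,p}(\R^2)$ to $\estdom u$. By the compact Sobolev embedding (Theorem~\ref{immsob}, recalling $sp > 2$) they converge \emph{strongly in $C^{0,\beta}(\overline\Omega')$} for a slightly smaller H\"older exponent $\beta < s-\frac{2}{p}$, on any ball containing all the $\overline\Omega_n$. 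In particular $|v_n|^p \to |u|^p$ uniformly on a neighborhood of $\overline\Omega$ (the map $t\mapsto |t|^p$ is locally Lipschitz and the $v_n$ are uniformly bounded). So it suffices to show
\begin{equation}\notag
\delta_n \int_{\partial\Omega_n} g\,\de\ell \xrightarrow[n\to+\infty]{} \int_{\partial\Omega} g\,\de\mu \qquad \text{for every fixed } g\in C(\overline\Omega),
\end{equation}
applied with $g = b\,|u|^p$, together with the estimate $\bigl|\delta_n\int_{\partial\Omega_n} b(|v_n|^p - |u|^p)\,\de\ell\bigr| \le \|b\|_{C(\overline\Omega)}\,\|\,|v_n|^p - |u|^p\,\|_{C}\;\delta_n\,\ell(K_n)$, which tends to $0$ because $\delta_n\,\ell(K_n)$ is bounded (equal to $3$ up to normalization) and the sup-norm factor vanishes. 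A remark is needed to restore the full sequence from the subsequence: the limit $\int_{\partial\Omega} b|u|^p\,\de\mu$ does not depend on the extracted subsequence, so a standard subsequence-of-subsequence argument gives convergence of the whole sequence.

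\textbf{Step 2: weak-$*$ convergence of the renormalized arc-length measures.} This is the crux, and it is exactly the content that is imported from \cite[Proposition 2.5]{JCA} (and ultimately goes back to Mosco-type constructions, cf.\ \cite{mosco1}); I would just invoke that $\delta_n\,\de\ell\rightharpoonup \de\mu$ weakly-$*$ on $\overline\Omega$, which immediately yields $\delta_n\int_{\partial\Omega_n} g\,\de\ell\to\int_{\partial\Omega} g\,\de\mu$ for $g\in C(\overline\Omega)$. For a self-contained argument one fixes the self-similar structure: $K_n$ is the union of $3\cdot 4^n$ congruent segments $M^{(j)}_n$, each the image of the unit generating segment under a similarity of ratio $3^{-n}$, and on the $(n{+}1)$-th level each segment is replaced by $4$ segments of ratio $3^{-(n+1)}$; since $\delta_{n+1}/\delta_n = 3/4$, the total renormalized mass is preserved and each renormalized piece $\delta_n\,\ell(M^{(j)}_n) = 3^{-n}\delta_n$ behaves like $\mu$ of the corresponding sub-snowflake-arc, by \eqref{eq:9bis} and the self-similar decomposition of $\mu$ in \eqref{eq:1}. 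Testing against a uniformly continuous $g$: on each segment $M^{(j)}_n$, which has diameter $3^{-n}\to 0$, the oscillation of $g$ is $\le \omega_g(3^{-n})$, so $\delta_n\int_{M^{(j)}_n} g\,\de\ell$ differs from $g(P_j)\,\delta_n\,\ell(M^{(j)}_n)$ (with $P_j$ an endpoint) by at most $\omega_g(3^{-n})\,\delta_n\,\ell(M^{(j)}_n)$; summing over $j$ the error is $\le 3\,\omega_g(3^{-n}) \to 0$, and the main sum is a Riemann-type sum converging to $\int_{\partial\Omega} g\,\de\mu$ by the definition of $\mu$ as the $d_f$-Hausdorff measure on $K$.

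\textbf{Main obstacle.} The only genuinely delicate point is Step~2 --- the weak-$*$ convergence $\delta_n\,\de\ell\rightharpoonup\de\mu$ --- and, as indicated in the statement, the paper sidesteps the detailed verification by citing \cite[Proposition 2.5]{JCA}; the role of the hypotheses $2 < sp < p$ is precisely to make the compact embedding in Step~1 available so that the nonlinear term $|v_n|^p$ may be passed to the limit. Everything else (the uniform extension, the compact embedding, the locally Lipschitz bound on $t\mapsto|t|^p$, the subsequence argument) is routine once those two ingredients are in place.
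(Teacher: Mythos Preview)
Your proposal is correct and follows exactly the strategy the paper intends: the paper's own ``proof'' consists of a single sentence deferring to \cite[Proposition 2.5]{JCA}, and your two-step argument (upgrade weak convergence in $W^{s,p}(\Omega)$ to uniform convergence via the compact embedding $W^{s,p}\hookrightarrow C^{0,\alpha}$ available since $sp>2$, then invoke the weak-$*$ convergence $\delta_n\,\de\ell\rightharpoonup\de\mu$ of the renormalized pre-fractal arc-length measures) is precisely the content of that cited result. One minor simplification: since $\partial\Omega_n\subset\overline\Omega$ and the embedding $W^{s,p}(\Omega)\hookrightarrow C(\overline\Omega)$ is already compact on the extension domain $\Omega$, you do not actually need to pass through $\estdom$ and $W^{s,p}(\R^2)$ in Step~1.
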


\begin{teo}\label{convergenza n p finito}
Let us assume $f\in L^1(\Omega)$, $\varphi_i,\varphi_{i,n}\in W^{s,p}(\Omega)$, $i=1,2$, such that $\varphi_1\leq\varphi_2$ on $\Omega$ and $\varphi_{1,n}\leq\varphi_{2,n}$ in $\Omega_n$ for every $n\in\N$. Suppose also that
\begin{equation}\notag
\varphi_{i,n}\xrightarrow[n\to+\infty]{}\varphi_i\quad\text{in}\quad W^{s,p}(\Omega),\quad i=1,2.
\end{equation}
If $u_{p,n}\in\mathcal{K}^n$ is a solution of \eqref{DVpn}, let $\hat{u}_{p,n}:=(\estdom\,u_{p,n})|_\Omega\in W^{s,p}(\Omega)$, where $\estdom$ is the extension operator given by Theorem \ref{teo estensione}.
Then, there exists a subsequence of $\hat{u}_{p,n}$ (still denoted with $\hat{u}_{p,n}$) which converges strongly in $W^{s,p}(\Omega)$ to a solution $u_p$ of \eqref{DVp}.
\end{teo}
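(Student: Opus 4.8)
The plan is to run a compactness-plus-lower-semicontinuity argument on the extended minimizers. First I would establish a uniform (in $n$) bound on $\|\hat u_{p,n}\|_{s,p}$. Testing the variational inequality \eqref{DVpn} with a fixed competitor (for instance the extension of some $v_n\in\mathcal K^n$ built from the convergent obstacles, or simply comparing $J_{p,n}(u_{p,n})$ against $J_{p,n}(\varphi_{1,n})$), and using $a_{p,n}(u_{p,n},u_{p,n})\ge$ (coercive part), Hölder on the $\int f u$ term, the uniform bound on the extension operator in Theorem \ref{teo estensione} (whose constant $\overline C_s$ is independent of $n$), and the convergence $\varphi_{i,n}\to\varphi_i$ in $W^{s,p}(\Omega)$ to bound the obstacle-dependent terms uniformly, one gets $\|\hat u_{p,n}\|_{W^{s,p}(\Omega)}\le C$ with $C$ independent of $n$; the boundary term $\delta_n\int_{\partial\Omega_n}b|u_{p,n}|^p\,\de\ell$ is handled via the equivalent-norm Lemma \ref{EquiNorm} (the pre-fractal analogue) with constants uniform in $n$, or simply discarded since it is nonnegative. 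By reflexivity of $W^{s,p}(\Omega)$ and the compact embedding coming from Theorem \ref{immsob} (recall $sp>2$), a subsequence $\hat u_{p,n}\rightharpoonup u_p$ weakly in $W^{s,p}(\Omega)$ and strongly in $C^{0,\alpha'}(\overline\Omega)$ for $\alpha'<\alpha$, hence uniformly.

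Next I would show $u_p\in\mathcal K$: since $\varphi_{1,n}\le u_{p,n}\le\varphi_{2,n}$ on $\Omega_n$ and the extension agrees with $u_{p,n}$ there, the uniform convergence of $\hat u_{p,n}$ together with $\varphi_{i,n}\to\varphi_i$ passes the inequalities to the limit pointwise a.e. on $\Omega$ (using that $\Omega_n$ exhausts $\Omega$), giving $\varphi_1\le u_p\le\varphi_2$; combined with $u_p\in W^{s,p}(\Omega)$ this yields $u_p\in\mathcal K$. Then I would pass to the limit in the variational inequality \eqref{DVpn}. Given $v\in\mathcal K$, I would construct competitors $v_n\in\mathcal K^n$ with $v_n\to v$ appropriately (e.g. truncating $v$ between $\varphi_{1,n}$ and $\varphi_{2,n}$, or projecting); the linear term $\int_{\Omega_n}f(v_n-u_{p,n})\to\int_\Omega f(v-u_p)$ by uniform convergence and $f\in L^1$, the boundary bilinear term is dealt with by Proposition \ref{prop JCA}, and for the leading nonlocal form one uses weak lower semicontinuity of the convex functional $w\mapsto\iint|w(x)-w(y)|^p/|x-y|^{2+sp}$ on the limit while the $u_{p,n}$-against-$v_n$ cross term is controlled by Hölder. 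This gives $a_p(u_p,v-u_p)-\int_\Omega f(v-u_p)\ge 0$ for all $v\in\mathcal K$, i.e. $u_p$ solves \eqref{DVp}.

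Finally I would upgrade weak to strong convergence in $W^{s,p}(\Omega)$. The standard device: test \eqref{DVpn} with a competitor close to $\hat u_{p,n}$'s weak limit to get $\limsup_n a_{p,n}(u_{p,n},u_{p,n})\le a_p(u_p,u_p)$ (after accounting for the linear and boundary terms, which converge), and combine with weak lower semicontinuity to get convergence of the seminorms $|\hat u_{p,n}|_{s,p}\to|u_p|_{s,p}$; then in the uniformly convex space $W^{s,p}$ (for $p>1$, via the Clarkson-type inequalities already invoked through Lemma 2.1 of \cite{LB}), weak convergence plus norm convergence gives strong convergence.

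\medskip
\textbf{Main obstacle.} The delicate point is the interplay between the two moving objects — the domains $\Omega_n\uparrow\Omega$ and the obstacles $\varphi_{i,n}\to\varphi_i$ — when constructing admissible test competitors $v_n\in\mathcal K^n$ that converge to a prescribed $v\in\mathcal K$ strongly enough to pass to the limit in the nonlinear form $a_{p,n}$. One must ensure the truncation/extension procedure keeps $v_n$ in $W^{s,p}(\Omega_n)$ with controlled norms and respects $\varphi_{1,n}\le v_n\le\varphi_{2,n}$; the fractional (nonlocal) nature of the energy makes localization to $\Omega_n$ non-trivial, so the uniform-in-$n$ extension bound of Theorem \ref{teo estensione} and the boundary convergence of Proposition \ref{prop JCA} are the essential tools that make this go through.
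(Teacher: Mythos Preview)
Your outline is close to the paper's argument, but there is one genuine soft spot in your step (4) and one missing technical device.

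\medskip
\textbf{Passing to the limit in the inequality.} You propose to pass to the limit directly in \eqref{DVpn}, saying ``weak lower semicontinuity of the convex functional $w\mapsto\iint|w(x)-w(y)|^p/|x-y|^{2+sp}$ on the limit while the $u_{p,n}$-against-$v_n$ cross term is controlled by H\"older''. This does not work as stated: in $a_{p,n}(u_{p,n},v_n-u_{p,n})$ the term $-a_{p,n}(u_{p,n},u_{p,n})=-|u_{p,n}|^p_{s,p,n}-\text{boundary}$ has the wrong sign for a lower-semicontinuity argument, and the cross term $(u_{p,n},v_n)_{s,p}$ depends \emph{nonlinearly} on $u_{p,n}$, so H\"older plus weak convergence of $u_{p,n}$ is not enough to identify its limit. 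You need either Minty's trick (use monotonicity to replace $a_{p,n}(u_{p,n},\cdot)$ by $a_{p,n}(v_n,\cdot)$, which has a strongly convergent first slot) or --- as the paper does --- pass to the equivalent minimization formulation via Theorem~\ref{Equi}/\ref{EquiN} and prove $J_p(\hat u)\le\liminf J_{p,n}(u_{p,n})\le\liminf J_{p,n}(w_n)=J_p(u_p)$, with recovery sequence $w_n:=\varphi_{2,n}\wedge(u_p\vee\varphi_{1,n})$. The latter is exactly the competitor construction you identify as the ``main obstacle''.

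\medskip
\textbf{Varying domains in the strong-convergence step.} Your norm-convergence-plus-uniform-convexity argument is sound on a fixed domain, but here $|u_{p,n}|_{s,p,n}$ is the seminorm on $\Omega_n$, not the seminorm of $\hat u_{p,n}$ on $\Omega$; the extension may add mass, so $|\hat u_{p,n}|_{s,p}\to|u_p|_{s,p}$ does not follow. The paper handles this by freezing an intermediate $\Omega_m$ ($m\le n$), using the $p$-monotonicity inequality $\bigl(|\xi|^{p-2}\xi-|\eta|^{p-2}\eta,\xi-\eta\bigr)\ge c_p|\xi-\eta|^p$ to bound $|\hat u_{p,n}-\hat u|^p_{s,p,m}$ directly, testing \eqref{DVpn} with $v_n:=(\hat u\vee\varphi_{1,n})\wedge\varphi_{2,n}$, showing the right-hand side tends to zero as $n\to\infty$, and only then sending $m\to\infty$. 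This two-scale $n\to\infty$, $m\to\infty$ device is what your sketch is missing; once inserted, either the paper's monotonicity estimate or your uniform-convexity route will close the argument.
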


\begin{proof}
By proceeding as in the proof of Theorem \ref{Ap}, we can prove that
\begin{equation}\label{equibdd}
\|u_{p,n}\|_{\infty,n}\leq C\text{ and }|u_{p,n}|^p_{s,p,n}\leq pC,
\end{equation}
where $C>0$ is independent from $n$. This implies that $u_{p,n}$ is bounded in $W^{s,p}(\Omega_n)$, for $2<sp<p$. Moreover, from the definition of $\hat{u}_{p,n}$, it holds that
\begin{equation}\notag
\|\hat{u}_{p,n}\|_{s,p}\leq\overline C_s\|u_{p,n}\|_{s,p,n},
\end{equation}
where $\overline C_s$ is the positive constant given by Theorem \ref{teo estensione} which is independent from $n$. Hence, $\hat{u}_{p,n}$ is equibounded in $W^{s,p}(\Omega)$, so it admits a subsequence, which we still denote by $\hat{u}_{p,n}$, which converges weakly to some $\hat{u}\in W^{s,p}(\Omega)$.

We recall that the solution $u_{p, n}$ to problem \eqref{DVpn} realizes the minimum on $\mathcal{K}^{n}$ of the functional $J_{p, n}(\cdot)$ defined in \eqref{Jpn}.

Our aim is to prove that
\begin{equation}\label{min hat u}
J_{p}(\hat{u})=\min_{v \in \mathcal{K}} J_{p}(v),
\end{equation}
where $J_{p}(\cdot)$ is the functional defined in \eqref{Jp}.\\
Indeed, since $\hat{u}_{p, n}$ weakly converges to $\hat{u}$ in $W^{s,p}(\Omega)$, we have that, for every fixed $m \in \mathbb{N}$,
\begin{equation}\label{stima 1}
\liminf_{n\to+\infty} |u_{p,n}|^p_{s,p,n}\geq \liminf_{n \to+\infty} |\hat{u}_{p,n}|^p_{s,p,m}\geq|\hat{u}|^p_{s,p,m}=\iint_{\Omega_m\times\Omega_m}\frac{|\hat{u}(x)-\hat{u}(y)|^p}{|x-y|^{2+sp}}\dx\dy.
\end{equation}

Moreover, since $\hat{u}_{p,n}$ coincides with $u_{p,n}$ on $\Omega_n$, from Proposition \ref{prop JCA} we have that $$\displaystyle\delta_n\int_{\partial\Omega_n} b |u_{p,n}|^p\,\de\ell \xrightarrow[n\to+\infty]{}\int_{\partial\Omega} b |\hat{u}|^p\,\de\mu.$$

Hence, passing to the limit for $m \to +\infty$ in \eqref{stima 1}, we obtain
\begin{equation}\label{stima 2}
J_{p}(\hat{u}) \leq \liminf_{n \to+\infty} J_{p,n}(u_{p, n})\leq \liminf_{n \to+ \infty} \min_{v \in \mathcal{K}^n} J_{p,n}(v).
\end{equation}

Moreover, given $u_p$ solution of problem \eqref{DVp}, we can construct a sequence of functions $w_{n} \in \mathcal{K}^{n}$ which strongly converges to $u_{p}$ in $W^{s,p}(\Omega)$ by setting
$$
w_{n}:=\varphi_{2, n} \wedge\left(u_{p} \vee \varphi_{1, n}\right),
$$
where we recall that $u \wedge v=\inf\{u,v\}$, $u \vee v=\sup\{u,v\}$ and $u^{+}=u \vee 0$.\\
We have that
$$
w_{n}=u_{p}+\left(\varphi_{1, n}-u_{p}\right)^{+}-\left(u_{p}+\left(\varphi_{1, n}-u_{p}\right)^{+}-\varphi_{2, n}\right)^{+}
$$
and, since $W^{s,p}(\Omega)$ is a Banach lattice,
\begin{equation}\label{stima 3}
\liminf_{n \to+\infty} \min_{v \in \mathcal{K}^{n}} J_{p,n}(v) \leq \liminf_{n \to+\infty} J_{p,n}(w_{n})=J_{p}(u_{p}).
\end{equation}
From \eqref{stima 2} and \eqref{stima 3}, we obtain \eqref{min hat u}.

Now, we will prove that the convergence is strong in $W^{s,p}(\Omega)$.\\
We remark that the following inequality holds, for every $p\geq 2$ and every $\xi,\eta\in\R^N$:
\begin{equation}\label{stimap}
\left(|\xi|^{p-2} \xi-|\eta|^{p-2} \eta, \xi-\eta\right)_{\mathbb{R}^{N}} \geq c_p|\xi-\eta|^{p},
\end{equation}
where $c_p\in(0,1]$ is a suitable constant depending on $p$ (for more details see \cite{biegert}).

Setting $v_{n}:=(\hat{u} \vee \varphi_{1, n}) \wedge \varphi_{2, n}$, it holds that $v_{n} \in \mathcal{K}_{n}$ and $v_{n} \rightarrow \hat{u}$ in $W^{s,p}(\Omega)$ as $n\to+\infty$. Then, for every $m \in \mathbb{N}$ such that $n \geq m$, we have that\\
\begin{equation}\notag
\begin{split}
c_{p} &|\hat{u}_{p,n}-\hat{u}|^p_{s,p,m}\leq c_{p}|\hat{u}_{p,n}-\hat{u}|^p_{s,p,n}\\
\leq&\iint_{\Omega_n\times\Omega_n}\left[\frac{|u_{p,n}(x)-u_{p,n}(y)|^{p-2}(u_{p,n}(x)-u_{p,n}(y))(u_{p,n}(x)-u_{p,n}(y)-(\hat{u}(x)-\hat{u}(y)))}{|x-y|^{2+sp}}\right.\\[2mm]
-&\left.\frac{|\hat{u}(x)-\hat{u}(y)|^{p-2}(\hat{u}(x)-\hat{u}(y))(u_{p,n}(x)-u_{p,n}(y)-(\hat{u}(x)-\hat{u}(y)))}{|x-y|^{2+sp}}\right]\dx\dy=(u_{p,n}, u_{p, n}-v_{n})_{s,p}\\[2mm]
+&(u_{p, n}, v_{n}-\hat{u})_{s,p}+\iint_{\Omega_n\times\Omega_n}\frac{|\hat{u}(x)-\hat{u}(y)|^{p-2}(\hat{u}(x)-\hat{u}(y))(u_{p,n}(x)-u_{p,n}(y)-(\hat{u}(x)-\hat{u}(y)))}{|x-y|^{2+sp}}\dx\dy
\end{split}
\end{equation}
\begin{equation}\label{ultima stima}
\begin{split}
\leq &\int_{\Omega_n} f\left(u_{p, n}-v_{n}\right)\dx-\frac{\delta_n}{p}\int_{\partial\Omega_n} b|u_{p,n}|^{p-2}u_{p,n}(u_{p,n}-v_n)\,\de\ell +(u_{p, n}, v_{n}-\hat{u})_{s,p}\\[2mm]
+&\iint_{\Omega_n\times\Omega_n}\frac{|\hat{u}(x)-\hat{u}(y)|^{p-2}(\hat{u}(x)-\hat{u}(y))(u_{p,n}(x)-u_{p,n}(y)-(\hat{u}(x)-\hat{u}(y)))}{|x-y|^{2+sp}}\dx\dy,
\end{split}
\end{equation}
where we used \eqref{stimap} and the fact that $u_{p,n}$ solves \eqref{DVpn}.

Passing to the $\limsup$ as $n \to+ \infty$ in the first and the last members of this chain of equalities and inequalities, we obtain that
\begin{equation}\label{limsup su m}
\limsup_{n \to+\infty} |\hat{u}_{p,n}-\hat{u}|^p_{s,p,m}= \limsup_{n \to+\infty}\iint_{\Omega_m\times\Omega_m}\frac{|\hat{u}_{p,n}(x)-\hat{u}_{p,n}(y)-(\hat{u}(x)-\hat{u}(y))|^{p}}{|x-y|^{2+sp}}\dx\dy\leq 0.
\end{equation}

Indeed, the fourth term in the right-hand side of \eqref{ultima stima} goes to zero since $\hat{u}_{p,n}$ weak converges to $\hat{u}$ in $W^{s,p}(\Omega)$. As to the first one, it holds that
$$
\int_{\Omega_n} f\left(u_{p, n}-v_{n}\right)\dx=\int_{\Omega} f\left(\hat{u}_{p, n}-v_{n}\right)\dx-\int_{\Omega\setminus\Omega_n} f\left(\hat{u}_{p, n}-v_{n}\right)\dx \xrightarrow[n\to+\infty]{} 0,
$$
since, when $n\to+\infty$, $\hat{u}_{p, n}$ and $v_{n}$ both strongly converge to $\hat{u}$ in $L^{p}(\Omega)$ and $\left|\Omega \setminus \Omega_n\right| \rightarrow 0$. 
The third term in \eqref{ultima stima} tends to zero because $v_{n}$ strongly converges to $\hat{u}$ in $W^{s,p}(\Omega)$.\\
It remains to study the second term in \eqref{ultima stima}. By using \eqref{ausiliare} and the trace theorem, we have that:
\begin{equation}\notag
\begin{split}
&-\frac{\delta_n}{p}\int_{\partial\Omega_n} b|u_{p,n}|^{p-2}u_{p,n}(u_{p,n}-v_n)\,\de\ell=\frac{\delta_n}{p}\int_{\partial\Omega_n} b|u_{p,n}|^{p-2}u_{p,n}(v_n-u_{p,n})\,\de\ell\leq\frac{\delta_n}{p^2}\int_{\partial\Omega_n}b\left(|v_n|^p-|u_{p,n}|^{p}\right)\,\de\ell\\
&\leq\frac{\delta_n}{p^2}\int_{\partial\Omega_n}b|v_n|^p\,\de\ell\leq\frac{\delta_n\|b\|_{C(\overline\Omega)}}{p^2}\int_{\partial\Omega_n}|v_n|^p\,\de\ell\leq\frac{\|b\|_{C(\overline\Omega)}C_{\Ext}}{p^2}\delta_n\|v_n\|^p_{s,p,\Omega},
\end{split}
\end{equation}
and the last quantity tends to zero as $n\to+\infty$ since $v_n$ strongly converges to $\hat{u}$ in $W^{s,p}(\Omega)$ and $\delta_n\to 0$.

Finally, passing to the limit for $m \to+\infty$ in \eqref{limsup su m}, we obtain the thesis.

\end{proof}

\subsection{Asymptotics for $p=\infty$ and $n\to+\infty$}

Finally, we perform the asymptotic analysis for $n \rightarrow+\infty$ when $p=\infty$. In this subsection, we suppose that the obstacles $\varphi_{1,n}$ and $\varphi_{2,n}$ belong to $\mathcal{K}^n_\infty$ for every $n\in\N$. Moreover, we replace the boundedness hypotheses in the convex sets $\mathcal{K}_\infty$ and $\mathcal{K}^n_\infty$ with the following: there exist two positive constants $C_1$ and $C_2$ such that
\begin{equation}\notag
\|v\|_{L^\infty(\Omega)}\leq C_1\quad\text{and}\quad |v|_{0,s}\leq C_2\quad\text{with } C_1+C_2\leq1.
\end{equation}
Here we denoted with $|\cdot|_{0,s}$ the seminorm of $C^{0,s}(\Omega)$.\\
Let $u_{\infty, n}$ be a maximizer of the problem \eqref{Pn}. For every $x\in\Omega$, we define
\begin{equation}\label{defmcsh}
\tilde{u}_{\infty, n}(x)=\sup_{y \in \Omega_n}\left\{u_{\infty, n}(y)-C_2|x-y|^s\right\}.
\end{equation}
From the definition of $\tilde{u}_{\infty,n}$ and from Corollary 2 in \cite{mcshane}, it follows that
\begin{equation}\label{bdd uinfty}
\left\|\tilde{u}_{\infty, n}\right\|_{W^{s,\infty}(\Omega)} \leq 1.
\end{equation}

\begin{teo}\label{convergenza n p infinito}
 Let $f \in L^{1}(\Omega)$ and the assumptions on $\varphi_1,\varphi_2,\varphi_{1,n},\varphi_{2,n}$ hold. Moreover, let us suppose that
\begin{equation}\label{conv ostacoli}
\varphi_{i,n}\xrightarrow[n\to+\infty]{}\varphi_i\quad\text{in}\quad C(\overline{\Omega}),\quad i=1,2.
\end{equation}
Then there exists a subsequence of $\tilde{u}_{\infty,n}$ defined in \eqref{defmcsh} such that $\tilde{u}_{\infty, n}$ $\star$-weakly converges as $n\to+\infty$ in $W^{s,\infty}(\Omega)$ to a maximizer $\tilde{u}_\infty$ of Problem \eqref{P}.
\end{teo}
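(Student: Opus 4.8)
The plan is to argue by compactness and then upgrade the limit to a maximizer of Problem \eqref{P} through a recovery--sequence argument, in the same spirit as the proofs of Theorems \ref{Ap} and \ref{convergenza n p finito}. By \eqref{bdd uinfty} the family $\{\tilde u_{\infty,n}\}_n$ is bounded in $W^{s,\infty}(\Omega)=C^{0,s}(\Omega)$, hence, after extending continuously to $\overline\Omega$, it is equibounded and equi-H\"older-continuous on the compact set $\overline\Omega$; Ascoli--Arzel\`a then yields a subsequence (not relabelled) converging uniformly on $\overline\Omega$ to some $\tilde u_\infty\in C(\overline\Omega)$, which in particular gives the $\star$-weak convergence in $W^{s,\infty}(\Omega)$ claimed in the statement. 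Since $|u_{\infty,n}|_{0,s}\le C_2$, the supremum in \eqref{defmcsh} is attained at $y=x$ when $x\in\Omega_n$, so $\tilde u_{\infty,n}\equiv u_{\infty,n}$ on $\Omega_n$ (this is exactly Corollary 2 of \cite{mcshane}); by lower semicontinuity of the $C^{0,s}$-seminorm under uniform convergence, $|\tilde u_\infty|_{0,s}\le C_2$.

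To check that $\tilde u_\infty\in\mathcal{K}_\infty$, I would exhaust $\Omega$ by compact sets: if $K\subset\Omega$ is compact, then $K\subset\Omega_n$ for all $n$ large (as $\{\Omega_n\}$ invades $\Omega$), hence $\varphi_{1,n}\le u_{\infty,n}=\tilde u_{\infty,n}\le\varphi_{2,n}$ and $\|\tilde u_{\infty,n}\|_{L^\infty(K)}\le C_1$ on $K$. Letting $n\to+\infty$ and using the uniform convergences \eqref{conv ostacoli} and $\tilde u_{\infty,n}\to\tilde u_\infty$, one obtains $\varphi_1\le\tilde u_\infty\le\varphi_2$ and $|\tilde u_\infty|\le C_1$ on $K$, and hence on all of $\Omega$ by arbitrariness of $K$. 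Together with $|\tilde u_\infty|_{0,s}\le C_2$ this shows $\tilde u_\infty\in\mathcal{K}_\infty$.

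Next I would identify the value. Using $\tilde u_{\infty,n}\equiv u_{\infty,n}$ on $\Omega_n$,
$$\int_{\Omega_n}u_{\infty,n}\,f\,\dx=\int_\Omega\tilde u_{\infty,n}\,f\,\dx-\int_{\Omega\setminus\Omega_n}\tilde u_{\infty,n}\,f\,\dx ;$$
the first term tends to $\int_\Omega\tilde u_\infty\,f\,\dx$ because $\tilde u_{\infty,n}\to\tilde u_\infty$ uniformly and $f\in L^1(\Omega)$, while the modulus of the second is bounded by $\big(\sup_n\|\tilde u_{\infty,n}\|_{L^\infty(\Omega)}\big)\int_{\Omega\setminus\Omega_n}|f|\,\dx$, which vanishes since $\{\tilde u_{\infty,n}\}$ is bounded by \eqref{bdd uinfty} and $|\Omega\setminus\Omega_n|\to0$. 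Thus $\int_{\Omega_n}u_{\infty,n}\,f\,\dx\to\int_\Omega\tilde u_\infty\,f\,\dx$. For the reverse comparison, given any competitor $w\in\mathcal{K}_\infty$ I would set $w_n:=(w\vee\varphi_{1,n})\wedge\varphi_{2,n}$ and restrict it to $\Omega_n$: on $\Omega_n$ the lattice operations between functions with $L^\infty(\Omega_n)$-norm $\le C_1$ and $C^{0,s}$-seminorm $\le C_2$ preserve both bounds, and $\varphi_{1,n}\le\varphi_{2,n}$ forces $\varphi_{1,n}\le w_n\le\varphi_{2,n}$, so $w_n\in\mathcal{K}^n_\infty$; moreover $w_n\to(w\vee\varphi_1)\wedge\varphi_2=w$ uniformly on $\overline\Omega$ by \eqref{conv ostacoli} and $\varphi_1\le w\le\varphi_2$, whence $\int_{\Omega_n}w_n\,f\,\dx\to\int_\Omega w\,f\,\dx$ by the same splitting. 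Since $u_{\infty,n}$ maximizes $\int_{\Omega_n}(\cdot)\,f\,\dx$ over $\mathcal{K}^n_\infty$, we have $\int_{\Omega_n}w_n\,f\,\dx\le\int_{\Omega_n}u_{\infty,n}\,f\,\dx$; letting $n\to+\infty$ gives $\int_\Omega w\,f\,\dx\le\int_\Omega\tilde u_\infty\,f\,\dx$ for every $w\in\mathcal{K}_\infty$, i.e.\ $\tilde u_\infty$ solves Problem \eqref{P}.

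The step I expect to be most delicate is the admissibility bookkeeping: verifying that the McShane extension keeps the H\"older and sup bounds uniformly in $n$ (where Corollary 2 of \cite{mcshane} and the identity $\tilde u_{\infty,n}=u_{\infty,n}$ on $\Omega_n$ are essential) and that the truncated competitor $w_n$ simultaneously lies between the obstacles, is bounded by $C_1$ in $L^\infty(\Omega_n)$, and has $C^{0,s}$-seminorm $\le C_2$, so that it is genuinely an element of $\mathcal{K}^n_\infty$. Once these structural points are settled, all the passages to the limit in the linear functionals are routine, relying only on $f\in L^1(\Omega)$, the equiboundedness from \eqref{bdd uinfty}, and $|\Omega\setminus\Omega_n|\to0$.
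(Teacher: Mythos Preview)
Your argument is correct and follows the same overall strategy as the paper (compactness of $\{\tilde u_{\infty,n}\}$ via \eqref{bdd uinfty}, then a recovery sequence to pass the maximality inequality to the limit). Two points distinguish your write-up from the paper's: you verify explicitly that the limit $\tilde u_\infty$ satisfies the obstacle constraints $\varphi_1\le\tilde u_\infty\le\varphi_2$ (via exhaustion by compacta), a step the paper leaves implicit; and for the recovery sequence you truncate directly with $\varphi_{i,n}$, setting $w_n=(w\vee\varphi_{1,n})\wedge\varphi_{2,n}$, whereas the paper first passes through the McShane envelopes $\tilde\varphi_{i,n}$ of \eqref{funzioni ausiliari} before truncating. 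Your shortcut is legitimate here because the subsection assumes $\varphi_{i,n}\in\mathcal K^n_\infty$, so $|\varphi_{i,n}|_{0,s}\le C_2$ already on $\Omega_n$ and lattice operations preserve both the sup and the H\"older bounds; the paper's detour through $\tilde\varphi_{i,n}$ gives functions with the correct H\"older constant on all of $\Omega$, which is more robust but not strictly needed under the stated hypotheses. One small wording point: when you bound $\int_{\Omega\setminus\Omega_n}|f|\,|\tilde u_{\infty,n}|\dx$ you invoke $|\Omega\setminus\Omega_n|\to0$, but what you actually use is absolute continuity of the integral of $f\in L^1(\Omega)$; the conclusion is the same.
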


\begin{proof} We adapt to our setting the proof of Theorem 4.2 in \cite{CF}.
Let $u_{\infty,n}$ be a maximizer of the problem \eqref{Pn}. By \eqref{bdd uinfty} we deduce that there exists $\tilde{u}_\infty \in W^{s,\infty}(\Omega)$ and a subsequence of $\tilde{u}_{\infty, n}$, which we still denote by $\tilde{u}_{\infty, n}$, which is $\star$-weakly converging to $\tilde{u}_\infty$ in $W^{s, \infty}(\Omega)$. Moreover,
$$
\|\tilde{u}_\infty\|_{W^{s,\infty}(\Omega)}\leq 1.
$$
We now construct, for every $w\in\mathcal{K}_{\infty}$, a sequence $w_{n}\in \mathcal{K}^n_\infty$ such that
$$
\lim_{n\to+\infty} \int_{\Omega_n} f w_{n}\,\dx=\int_{\Omega} f w\,\dx.
$$
First we define, for every $x \in \Omega$,
\begin{equation}\label{funzioni ausiliari}
\begin{split}
\tilde\varphi_{1, n}(x) & =\sup_{y \in \Omega}\left\{\varphi_{1, n}(y)-C_2|x-y|^s\right\},\\
\tilde\varphi_{2, n}(x) & =\inf_{y \in \Omega}\left\{\varphi_{2, n}(y)+C_2|x-y|^s\right\},\\
\tilde\varphi_{1}(x) & =\sup_{y \in \Omega}\left\{\varphi_{1}(y)-C_2|x-y|^s\right\},\\
\tilde\varphi_{2}(x) & =\inf_{y \in \Omega}\left\{\varphi_{2}(y)+C_2|x-y|^s\right\}.
\end{split}
\end{equation}
From direct calculations, for every $x \in \Omega$ it holds that
\begin{equation}\label{relazioniphi}
\varphi_{1, n}(x) \leq \tilde\varphi_{1, n}(x) \leq \tilde\varphi_{2, n}(x) \leq \varphi_{2, n}(x).
\end{equation}
Moreover, again from Corollary 2 in \cite{mcshane} and from direct inspection, we have that
\begin{equation}\label{bdd estese}
\left\|\tilde\varphi_{1, n}\right\|_{W^{s,\infty}(\Omega)} \leq 1 \quad\text{and}\quad\left\|\tilde\varphi_{2, n}\right\|_{W^{s,\infty}(\Omega)} \leq 1.
\end{equation}
For every $w \in\mathcal{K}_{\infty}$, we define $w_{n}:=\tilde\varphi_{2, n}\wedge\left(w \vee\tilde\varphi_{1, n}\right)$. From \eqref{relazioniphi} and \eqref{bdd estese} we have $w_n\in \mathcal{K}_{n}^{\infty}$. Moreover, from \eqref{conv ostacoli}, it follows that $w_{n} \rightarrow \tilde\varphi_{2} \wedge\left(w \vee\tilde\varphi_{1}\right)$ in $L^{\infty}(\Omega)$ as $n\to+\infty$.\\
Now, since $w \in \mathcal{K}_{\infty}$, hence in particular $\varphi_{1} \leq w \leq \varphi_{2}$, by direct calculation we deduce that $\tilde\varphi_{1}\leq w \leq\tilde\varphi_{2}$. Therefore, $w_{n} \rightarrow w$ in $L^{\infty}(\Omega)$ as $n\to+\infty$ and in particular, since $f\in L^1(\Omega)$,
$$
\lim_{n \to+\infty} \int_{\Omega_n} f w_{n}\,\dx=\int_{\Omega} f w\,\dx.
$$
Then, since $u_{\infty, n}$ is a maximizer of Problem \eqref{Pn}, e.g. in particular
$$
\int_{\Omega_n} f u_{\infty, n}\,\dx \geq \int_{\Omega_n} f w_{n}\,\dx,
$$
passing to the limit as $n\to+\infty$ we have
$$
\int_{\Omega} f \tilde{u}_\infty\,\dx \geq \int_{\Omega}f w\,\dx
$$
for every $w \in\mathcal{K}_{\infty}$. Hence, it follows that $\tilde{u}_\infty$ is a maximizer of Problem \eqref{P}, thus concluding the proof.
\end{proof}

\begin{rem}
We point out that the results of this paper can be generalized to more general irregular structures. For example, one can consider the case of a two-dimensional domain $\Omega_\alpha\subset\R^2$ having as boundary a (possibly random and/or asymmetric) Koch-type fractal mixture $K_\alpha$, for $\alpha\in (2,4)$ (see e.g. \cite{LaReVe,CF,CreoCSF}), along with the natural approximating domains $\Omega_\alpha^n$ having pre-fractal boundaries $K_\alpha^n$, for $n\in\N$. Moreover, under additional structural assumptions, these results could be applied also to the case of a $(\epsilon,\delta)$ domain (see \cite{Jones}) having as boundary a more general $d$-set.
\end{rem}

\section*{Statements and declarations} The authors have no competing interests to declare that are relevant to the content of this article. This research received no external funding.\\

\end{document}